\numberwithin{equation}{section}
\numberwithin{figure}{section}
\theoremstyle{plain}
\newtheorem{thm}{\protect\theoremname}[section]
  \theoremstyle{definition}
  \newtheorem{defn}[thm]{\protect\definitionname}
  \theoremstyle{plain}
  \newtheorem{lem}[thm]{\protect\lemmaname}
  \theoremstyle{remark}
  \newtheorem{rem}[thm]{\protect\remarkname}
  \theoremstyle{plain}
  \newtheorem{prop}[thm]{\protect\propositionname}
  \theoremstyle{plain}
  \newtheorem{cor}[thm]{\protect\corollaryname}
  \theoremstyle{definition}
  \newtheorem{example}[thm]{\protect\examplename}
  \providecommand{\corollaryname}{Corollary}
  \providecommand{\definitionname}{Definition}
  \providecommand{\examplename}{Example}
  \providecommand{\lemmaname}{Lemma}
  \providecommand{\propositionname}{Proposition}
  \providecommand{\remarkname}{Remark}
\providecommand{\theoremname}{Theorem}
\begin{document}

\title{Rank of tropical curves and tropical hypersurfaces}

\author{Boaz elazar$^{\dagger}$}

\address{Department of Mathematics, The Weizmann Institute of Science, Rehovot,
Israel}

\email{boaz.elazar@weizmann.ac.il }

\thanks{$^{\dagger}$The author has been partly supported by the Israeli
Science Foundation grant no. 448/09}
\begin{abstract}
This paper is devoted to the bounding and computation of the dimension
of deformation spaces of tropical curves and hypersurfaces. This characteristic
is interesting in light of the fact that it often coincides with the
dimension of equisingular (equigeneric etc.) deformation spaces of
algebraic curves and hypersurfaces. In this paper, we obtain a series
of precise formulas, upper and lower bounds, and algorithms for computing
dimension of deformation spaces of various classes of tropical curves
and hypersurfaces.
\end{abstract}

\maketitle

\section{Introduction}

The goal of the present work is to study deformation spaces of embedded
and parameterized planar tropical curves, spacial tropical curves
and affine tropical hypersurfaces. The dimesion of a curve's deformation
space is called \textit{the rank} of the curve. The same definition
is used with hypersurfaces. We either give explicit formulas for their
dimension, when possible, or provide lower and upper bounds for it.
In some cases, efficient algorithms for computing the precise values
of the dimension are given.

Our results are related to the definition of end-marked tropical curves
and some results regarding them (Section 3), ranks of plane tropical
curves (Section 4), and ranks of tropical surfaces in $\mathbb{R}^{3}$
(Section 5). We will now shortly describe and comment on the results
of the present work leaving a complete formulation for Sections 3,4
and 5.

(1) \textit{Precise formulas}. We exhibit two types of precise formulas
for the dimension of deformation spaces of tropical curves and hypersurfaces.

One of them equates the actual and the expected dimensions. The latter
means the value obtained by counting the conditions on the parameters
imposed by local combinatorial data. The result is that an equisingular
family of plane tropical curves is always of expected dimension if
the number of higher singularities does not exceed $2$ (Corollary
\ref{cor:param graph equ 1}). Moreover, the actual dimension can
be greater than the expected one already for families of tropical
curves with three higher singularities (Example \ref{exm:3 higher sing}).
It is worth to say that this differs from the algebraic analogue,
which states that an equisingular family of plane algebraic curves
is of expected dimension in one of two cases: First - when the curves
have only nodal singularities {[}15{]} (see a similar statement in
the tropical approach in {[}9, 12{]}). Second - when the number of
higher (non-nodal) singularities does not exceed a bound proportional
to the degree {[}5, 13{]}. We would like to remark that our satatement
(in the tropical approach) can be used for a tropical enumeration
of algebraic curves with any number of nodes and one cusp.

The other precise formula uses some ordering in addition to the local
combinatorial data. It holds in a more general situation: for plane
tropical curves with at most $3$ non-nodal singularities (Theorem
\ref{thm:3 non triang}) and for higher-dimensional tropical hypersurfaces
having at most $3$ singularities (Theorem \ref{thm:n>3 second}).

(2) \textit{Lower and upper bounds}. The \textit{expected rank} is
computed under the condition that all the relations imposed to parameters
are independent, as defined below, following {[}12{]} and {[}9{]}.
The lower bounds to the rank are usually given by the expected rank.
By attaching additional combinatorial information, we strengthen this
lower bound for plane tropical curves (Theorem \ref{thm:some non triang})
and generalize it to higher dimensions (Theorem \ref{thm:n>3 first}).

For the upper bounds we apply another idea. We consider plane tropical
curves with their parameterizations and subdivide the parameterizing
graph into simple (trivalent) pieces. Then we compute the dimensions
of deformation spaces for each component, separately deducing an upper
bound for the rank of the original curve. Furthermore, by analyzing
possible dependence of conditions in the latter consideration, we
provide a precise formula for the rank of an arbitrary plane tropical
curve (Theorem \ref{thm:bounded comp 1}).

At last, we study deformation spaces of spacial tropical curves similar
to 1-dimensional skeleta of tropical surfaces (hypersurfaces). These
tropical curves rather differ from the simple ones studied by Mikhalkin,
since they do not have trivalent vertices. We give an upper bound
for their rank in Theorem \ref{thm:closed volumes}.

(3) \textit{Algorithms}. The results of the theorems are all based
on algebraic or combinatorial calculations, like matrix buildings
(Theorem \ref{thm:bounded comp 1}) and calculations of expected ranks
of surfaces (Theorem \ref{thm:Algo}).

$\:$

\section{Preliminaries}

In This part we describe tropical curves and tropical hypersurfaces
due to two approaches. First, we define them as a combinatorial dual
to some convex polytope with a subdivision. This approach follows
the definitions and results of {[}12{]}. The second uses parameterizes
graphs embedd into $\mathbb{R}^{2}$ with some conditions. This approach
follows the definitions and results of {[}9{]}.

\subsection{Tropical hypersurfaces and subdivisions of Newton polytopes}

Given the non-Archimedean field $\mathbb{K}$ of convergent Puiseux
serieses, define a non-Archimedean valuation map $Val:\mathbb{K}^{*}\to\mathbb{Q}$.
Let $f$ be a Laurent polynomial, namely, a polynomial equation 
\[
f\left(z\right)\equiv\sum\limits _{\omega\in\Delta\cap\mathbb{Z}^{n}}A_{\omega}z^{\omega},
\]
 where $\Delta$ is the Newton polytope (the convex hull of the points
$\omega\in\mathbb{Z}^{n}$ such that $A_{\omega}\neq0$), $A_{\omega}\in\mathbb{K}^{*},\:z^{\omega}=z_{1}^{\omega_{1}}...z_{n}^{\omega_{n}}$
. The tropical polynomial
\[
(1)\qquad N_{f}\left(x\right)=\max\limits _{\omega\in\Delta\cap\mathbb{Z}^{n}}\left(\langle\omega,x\rangle+c_{\omega}\right),
\]
where $x\in\mathbb{R}^{n},\:c_{\omega}=Val\left(A_{\omega}\right)$,
is called the $tropicalization$ of $f$. $N_{f}\left(x\right)$ is
a piecewise linear convex function. 
\begin{defn}
The corner locus of $N_{f}$, i.e. the points where $N_{f}$ is not
linear, defines our object of interest - the \textit{tropical hypersurface}
(see {[}2{]} for details).
\end{defn}
Let $f$ be as before, $I=\left\{ \omega|A_{\omega}\neq0\right\} \subset\mathbb{Z}^{n}$,
and $\Delta=conv\left(I\right)$. Take the convex hull $\hat{\Delta}$
of the set $\left\{ \left(\omega,-Val\left(c_{\omega}\right)\right)\in\mathbb{R}^{n+1}|\omega\in I\right\} $.
Let $\nu_{f}:\Delta\to\mathbb{R},\:\nu_{f}\left(\omega\right)=min\left\{ x|\left(\omega,x\right)\in\hat{\Delta}\right\} $.
This is a convex piecewise linear function. Its linearity domains
subdivide $\Delta$ into convex polytopes with vertices in $I$. Denote
this subdivision by $S_{f}$. 
\begin{lem}
{[}12 - Lemma 2.1{]}, The subdivision $S_{f}$ of $\Delta$ defined
above is combinatorially dual to the corner-locus of $N_{f}$. 
\end{lem}
When we shall use this approach to descrive tropical curves in $\mathbb{R}^{2}$,
we shall use the following definition due to {[}12{]}:
\begin{defn}
The \textit{expected rank} of a tropical curve $T$ obtained as a
dual to a subdivision of a Newton polygon $\Delta$ is $rk_{exp}\left(T\right):=\#Vert\left(S_{T}\right)-1-\sum\limits _{\delta}\left(\#Vert\left(\delta\right)-3\right)$
where $T\subset\mathbb{R}^{2}$ is an embedded plane tropical curve,
$\Delta$ is its Newton polygon and $S_{T}$ is the dual subdivision
of $\Delta$. Here, $Vert\left(S_{T}\right)$ is the set of vertices
of $S_{T}$ and $Vert\left(\delta\right)$ is the set of vertices
of $\delta$, where $\delta$ runs over all polygons of $S_{T}$ .
\end{defn}
\begin{rem}
\label{rem-exp-rank}The intuitive way to understand the expected
rank of a curve is as the sum of all \textit{freedom degrees}, represented
by the vertices of $S_{T}$, minus one, because fixing one such vertex
cannot change the curve. Then, we reduce the result by one for each
\textit{condition} imposed to the curve by each overvalance vertex,
represented by a polytope which is not a simplex. I.e. the ``first''
three vertices of each such polytope represent the intersection of
three planes of the tropical polynomial $N_{f}$, what defines a point.
When we want to set another plane, with set slopes, to fit the point,
its coefficient is not free. It must equate a specific value. But
some conditions may have already taken into account as they are adjacent
to more than one polytope. This is why there is a difference between
the expected rank and the actual rank. Take a look at the next figure
for example.

\includegraphics[scale=0.4]{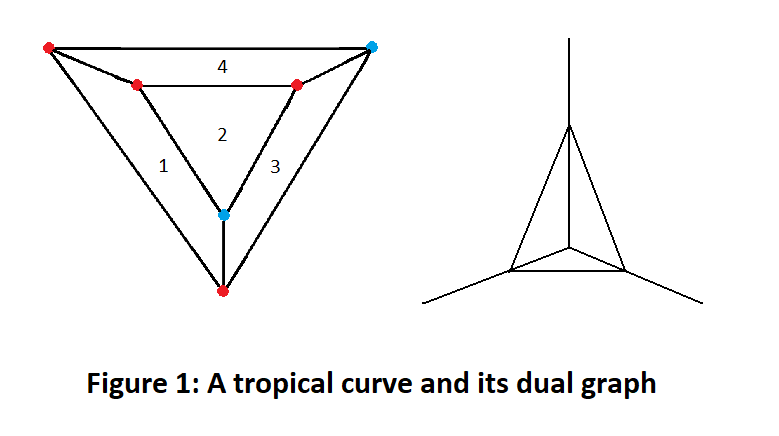}

The red vertices of polytope 1 can be set as we wish. The blue one
has to have a specific parameter or else it will not fit. So the expected
rank is reduced by one. Then the second polytope has only two vertices
set, and we can set the third as we wish. The fourth vertex of the
third polytope is set, and then we get to the fourth polytope. This
polytope shows what we define as a \textit{double condition} - the
blue vertex of it was already set to fit the third polytope, but we
do not know if this setting fits the fourth polytope. If it does,
then there is no further change needed for the rank, but if it does
not, we need to reduce the rank once more, as we need to adjust one
of the other vertices so that the settings will fit. This is why it
is called a double condition. The expected rank assumes the ``worst''
case, so we reduce the expected rank by one for each such vertex.
We shall see the computation in Example \ref{exm:3 higher sing}.
\end{rem}
Let $X\subset\mathbb{R}^{n}$, $n\geq2$, be a tropical hypersurface
with Newton polytope $\Delta$ and its dual subdivision $S_{X}$.
The set $Def^{es}\left(X\right)$ of tropical hypersurfaces in $\mathbb{R}^{n}$
with the Newton polytope $\Delta$ and its dual subdivision coinciding
with $S_{X}$ is called the \textit{equisingular deformations space}
of the hypersurface $X$.
\begin{lem}
({[}12{]}) $Def^{es}(X)$ can be embedded into $\mathbb{R}^{N-1}$,
where $N$ is the number of vertices of the subdivision $S_{T}$,
as a convex polyhedron.
\end{lem}
The parameters of this embedding are the coefficients $c_{\omega}$
of the tropical polynomial (1), where $\omega$ runs over all but
one vertex of the given subdivision of $\Delta$. Each coefficient
is attached to a vertex of the subdivision.
\begin{defn}
\textit{The rank} of a tropical hypersurface is determined as the
dimension of the equisingular deformation space of the hypersurface.
For a tropical hypersurface $X$, we denote its rank by 
\[
rk\left(X\right):=dimDef^{es}\left(X\right).
\]
\end{defn}

\subsection{Tropical curves in $\mathbb{R}^{2}$ as parameterized graphs}

Let $\bar{\Gamma}$ be a weighted, finite and compact graph with weights
in $\mathbb{N}$. Define $\Gamma:\bar{\Gamma}\backslash\nu_{1}$ where
$\nu_{1}$ are the 1-valent vertices. Note that $\Gamma$ is non-compact.
\begin{defn}
\textit{A parameterized tropical curve} is a pair of $\bar{\Gamma}$
without divalent vertices, and a proper map $h:\Gamma\to\mathbb{R}^{n}$
satisfying these two conditions:
\end{defn}
\begin{enumerate}
\item The image $h\left(E\right)$ of an edge $E\subset\Gamma$ is contained
in a line $l\subset\mathbb{R}^{n}$ with a rational slope, such that
$h|_{E}$ is either an embedding or maps $E$ to a point.
\item For every vertex $V\in\Gamma$ the following property holds: let $E_{1},...,E_{m}\subset\Gamma$
be the edges adjacent to $V$ and $\omega_{1},...,\omega_{m}\in\mathbb{N}^{m}$
be their corresponding weights. Let $v_{1},...,v_{m}\in\mathbb{Z}^{n}$
be the primitive integer vectors at the point $h\left(V\right)$,
in the direction of $h\left(E_{1}\right),...,h\left(E_{m}\right)$.
Note, that if $h\left(E_{i}\right)$ is a point for some $i$, then
we take $v_{i}=0$. Then 
\[
\sum\limits _{i=1}^{m}\omega_{i}v_{i}=0.
\]
\end{enumerate}
Two parameterized tropical curves $h:\Gamma\to\mathbb{R}^{n},\:h':\Gamma'\to\mathbb{R}^{n}$
are called \textit{equivalent} if there exists a homeomorphism $\Phi:\Gamma\to\Gamma'$
such that $h=h'\circ\Phi$ and the weights of the edges are kept the
same. Two equivalent parameterized tropical curves will be considered
the same here. We call the image $h\left(\Gamma\right)$ \textit{plane
embedded tropical curve}.

When we shall use this approach to describe tropical curves in $\mathbb{R}^{2}$
we shall use the following definition due to {[}9{]}:
\begin{defn}
The \textit{expected rank} of a parameterized tropical curve $\left(\bar{\Gamma},h\right)$
is $rk_{exp}\left(\bar{\Gamma},h\right):=\#End\left(\Gamma\right)+\left(n-3\right)\left(1-g\right)-\sum\limits _{\nu}\left(mt\left(\nu\right)-3\right)$,
where $\#End\left(\Gamma\right)$ is the number of unbounded edges
of $\Gamma$, $g$ is the genus of $\Gamma$, $\nu$ runs over all
vertices of $\Gamma$ and $mt\left(\nu\right)$ is the valence of
$\nu$ regardless the weights.
\end{defn}
For a parameterized tropical curve $\left(\bar{\Gamma},h\right)$
in $\mathbb{R}^{n}$, we deffine its \textit{equiparametric deformation
space} $Def^{ep}\left(\bar{\Gamma},h\right)$ as the set of parameterized
tropical curves $\left(\bar{\Gamma}',h'\right)$ such that there is
a homeomorphism $\psi:\bar{\Gamma}\rightarrow\bar{\Gamma'}$ such
that $h'\left(\psi\left(e\right)\right)$ is parallel to $h\left(e\right)$
for each edge $e$ of $\Gamma$.
\begin{lem}
({[}3, 9{]}) $Def^{ep}\left(\bar{\Gamma},h\right)$ can be embedded
into $\mathbb{R}^{M+n}$, where $M$ is the number of closed edges
of $\Gamma$, as a convex polyhedron.
\end{lem}
The parameters of this embedding are the coordinates of a fixed vertex
of $\Gamma$ and the length of closed edges of $\Gamma$.
\begin{defn}
The rank of a parameterized tropical curve is defined to be 
\[
rk\left(\bar{\Gamma},h\right):=dimDef^{ep}\left(\bar{\Gamma},h\right)
\]
\end{defn}

\section{\textbf{End-marked plane tropical curves}}
\begin{defn}
\label{def-end-marked-curve}(1) Let $\left(\bar{\Gamma},h\right)$
be an irreducible parameterized plane tropical curve. Let $m\leq\#End\left(\Gamma\right)$,
and $\bar{\gamma}=\left(\gamma_{1},...,\gamma_{m}\right)\subset\Gamma$
an ordered configuration of distinct points lying on the interior
of noncompact edges of $\Gamma$, at most one on each edge. We call
the tuple $\left(\bar{\Gamma},h,\bar{\gamma}\right)$ an \textit{end-marked}
tropical curve, and we say that this curve matches an ordered configuration
of points $\bar{p}=p_{1},...,p_{m}\subset\mathbb{R}^{2}$, if $h\left(\gamma_{i}\right)=p_{i},\:1\leq i\leq m$. 

(2) The same definition can be used when $\bar{\Gamma}$ is a straight
infinte line. In this case we may have one point or two distinct points
lying on it.
\end{defn}
\textbf{Notation:} Denote by $Def_{\bar{p}}^{ep}\left(\bar{\Gamma},h\right)$
the set of those curves $\left(\bar{\Gamma'},h'\right)\in Def^{ep}\left(\bar{\Gamma},h\right)$
which lift up to end-marked curves $\left(\bar{\Gamma'},h',\bar{\gamma'}\right)$
matching the given configuration $p$.

$\:$
\begin{defn}
A parameterized tropical curve in $\mathbb{R}^{2}$ is called \textit{simple}
if all the following holds:
\end{defn}
\begin{enumerate}
\item Each vertex of $\Gamma$ has exactly three adjacent edges.
\item The map $h$ immerses $\Gamma$ into $\mathbb{R}^{2}$.
\item No more then two points in $\Gamma$ are mapped into the same point
in $\mathbb{R}^{2}$.
\item If two distinct points in $\Gamma$ are mapped to the same point in
$\mathbb{R}^{2}$, none of them is a vertex of $\Gamma$.
\end{enumerate}
\begin{prop}
In the above notations, let $\left(\bar{\Gamma},h\right)$ be simple.
If $m=\#\bar{p}<\#End\left(\Gamma\right)$, then 
\[
dimDef_{\bar{p}}^{ep}\left(\bar{\Gamma},h\right)=rk\left(\bar{\Gamma},h\right)-m.
\]
\end{prop}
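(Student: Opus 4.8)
\emph{The plan is to} realize $Def_{\bar{p}}^{ep}\left(\bar{\Gamma},h\right)$ as a fibre of an evaluation map and to show that this map is a submersion. Since the combinatorial type is fixed along $Def^{ep}\left(\bar{\Gamma},h\right)$ --- in particular every edge direction $dh|_{e}$ --- each noncompact edge $e_{i}$ carrying a marked point $\gamma_{i}$ has image a ray of fixed primitive direction $u_{i}$ and weight $w_{i}$, so its position is recorded by a single scalar offset $o_{i}=\langle q_{i},u_{i}^{\perp}\rangle$, where $q_{i}$ is the image of the adjacent vertex. The requirement $h'\left(\gamma_{i}'\right)=p_{i}$ is then the single affine condition that this ray pass through $p_{i}$, i.e. that $o_{i}$ take a prescribed value $o_{i}^{\circ}$. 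Collecting these I obtain $ev\colon Def^{ep}\left(\bar{\Gamma},h\right)\to\mathbb{R}^{m}$, $\left(\bar{\Gamma'},h'\right)\mapsto\left(o_{1},\dots,o_{m}\right)$, with $Def_{\bar{p}}^{ep}\left(\bar{\Gamma},h\right)=ev^{-1}\left(o_{1}^{\circ},\dots,o_{m}^{\circ}\right)$. For a simple curve $rk\left(\bar{\Gamma},h\right)=\dim Def^{ep}\left(\bar{\Gamma},h\right)=\#End\left(\Gamma\right)+g-1$, so since $ev$ lands in $\mathbb{R}^{m}$ every nonempty fibre has dimension at least $rk\left(\bar{\Gamma},h\right)-m$; this gives one inequality for free.

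For the reverse inequality it suffices to prove that $ev$ is a submersion, equivalently that the $m$ matching conditions are linearly independent functionals on $T\,Def^{ep}\left(\bar{\Gamma},h\right)$. First I would enlarge $ev$ to the total evaluation map $Ev\colon Def^{ep}\left(\bar{\Gamma},h\right)\to\mathbb{R}^{\#End\left(\Gamma\right)}$ recording the offsets of all ends, so that $ev=\pi\circ Ev$ for the coordinate projection $\pi$ onto the $m$ marked ends. The balancing of $\left(\bar{\Gamma},h\right)$ at infinity, $\sum_{i}w_{i}u_{i}=0$, forces $\sum_{i}w_{i}u_{i}^{\perp}=0$; tracing this through a global translation and through each bounded-edge-length deformation shows that the flux $\sum_{i}w_{i}o_{i}$ is constant along $Def^{ep}\left(\bar{\Gamma},h\right)$. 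Hence $\operatorname{im}Ev$ lies in the hyperplane $H=\{\sum_{i}w_{i}o_{i}=\mathrm{const}\}$, all of whose coefficients $w_{i}$ are strictly positive.

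The crux is to show that $\operatorname{im}Ev$ is \emph{all} of $H$, i.e. that $\operatorname{rank}Ev=\#End\left(\Gamma\right)-1$ and the flux relation is the only relation among the offsets. I would prove $\operatorname{rank}Ev\geq\#End\left(\Gamma\right)-1$ directly: translations contribute two independent offset directions (the end directions are not all parallel), and for each remaining end one produces an offset-changing tangent vector by pushing that end and absorbing the resulting imbalance along a path in the connected graph $\Gamma$ into another end, the bounded-edge lengths along the path supplying the needed freedom; trivalence of the simple curve keeps the local vertex analysis unobstructed and the count matches $\#End\left(\Gamma\right)+g-1$. Granting $\operatorname{im}Ev=H$, the proof finishes with a linear-algebra step: because $m<\#End\left(\Gamma\right)$ there is an unmarked end $e_{j}$, and $w_{j}>0$ means the flux functional does not vanish on its coordinate axis, so $\ker\pi\not\subseteq H$ and $\pi|_{H}\colon H\to\mathbb{R}^{m}$ is surjective. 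Thus $ev$ is a submersion, the $m$ conditions are independent, and $\dim Def_{\bar{p}}^{ep}\left(\bar{\Gamma},h\right)=rk\left(\bar{\Gamma},h\right)-m$.

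The step I expect to be the main obstacle is exactly $\operatorname{rank}Ev=\#End\left(\Gamma\right)-1$, that the flux relation is the unique relation among the end offsets. The genus-$0$ case is rigid and transparent, but for $g\geq1$ one must verify that the $g$-dimensional family of deformations fixing all ends (the cycle deformations) accounts precisely for the gap between $\dim Def^{ep}\left(\bar{\Gamma},h\right)=\#End\left(\Gamma\right)+g-1$ and $\#End\left(\Gamma\right)-1$, so that no further offset relations appear; this is where I anticipate leaning most heavily on the simplicity of $\left(\bar{\Gamma},h\right)$.
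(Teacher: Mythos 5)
Your strategy is genuinely different from the paper's, and most of it is sound. The paper argues in the dual picture: $h\left(\Gamma\right)$ is the corner locus of the tropical polynomial $N_{f}$, and the condition $h\left(\gamma_{i}\right)=p_{i}$ pins down the difference $c_{j}-c_{i}$ of the coefficients of the two linear domains adjacent to the end through $p_{i}$; these domains are dual to boundary lattice points of the Newton polygon, and since $m<\#End\left(\Gamma\right)$ the $m$ boundary edges involved form a proper subset of the boundary cycle, i.e.\ a disjoint union of chains, so each condition eliminates a coefficient not occurring in the previous ones and the system is triangular. You instead work in the primal, parameterized picture with an evaluation map on end offsets. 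Your flux relation $\sum_{i}w_{i}o_{i}=\mathrm{const}$ is exactly the balancing condition (1) of the next proposition (read through $R_{\pi/2}\left(w_{i}u_{i}\right)=w_{i}u_{i}^{\perp}$ and $\langle u_{i}^{\perp},p_{i}\rangle=o_{i}$), and your closing linear-algebra step --- one unmarked end plus strict positivity of the weights makes $\pi|_{H}$ surjective --- is correct.

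The gap is the one you name yourself: $\operatorname{rank}Ev=\#End\left(\Gamma\right)-1$ is used but not proved. The tangent vectors you describe are not coordinate vectors --- changing a bounded-edge length on a path translates an entire sub-curve and shifts every end offset attached to it --- so ``push one end and absorb the imbalance'' still leaves you to show that these vectors together with the translations span the hyperplane $H$, and for $g\geq1$ to rule out additional relations coming from the cycle deformations. The claim is true, and the quickest way to close it is precisely the paper's dual viewpoint: for a simple curve the set of coefficient vectors of $N_{f}$ realizing the subdivision $S_{T}$ is open, the boundary coefficients can therefore be perturbed independently (modulo the global constant), and the end offsets are their consecutive differences, which sweep out all of $H$. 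Note also that $\operatorname{im}Ev=H$ is stronger than the proposition requires and is essentially the content of Proposition 2; if you only want the present statement, the triangular argument for $m<\#End\left(\Gamma\right)$ conditions is shorter and avoids the rank computation entirely.
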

\begin{proof}
As $h\left(\Gamma\right)$ supports an embedded tropical curve, and
as this curve coincides with the corner locus of $N_{f}$ ({[}2{]}),
we may look at the linear domains of $N_{f}$. Restricting $h\left(\gamma_{i}\right)$
to $p_{i}$ means that the parameter of one of the two linear domains
around $p_{i}$ is also restricted, i.e. $a_{i}p_{i}^{x}+b_{i}p_{i}^{y}+c_{i}=a_{j}p_{i}^{x}+b_{j}p_{i}^{y}+c_{j}$
(where $p_{i}^{x}$ and $p_{i}^{y}$ are the coordinates of $p_{i}$)
and either $c_{i}$ depends on $c_{j}$ or vice versa. This means
$dimDef^{ep}\left(\bar{\Gamma},h\right)$ should be reduced by $1$
for each $i$.
\end{proof}
This means that the relations, i.e. the restrictions on the parameters,
imposed by $m<\#End\left(\Gamma\right)$ points on the non-compact
edges of $\Gamma$ are always independent. In turn, for $m=\#End\left(\Gamma\right)$
the relations are dependent, and we call this dependence a \textit{new
balancing condition}. Before stating it, notice that $\left(\bar{\Gamma},h,\bar{\gamma}\right)$
can be represented as a dual to a subdivision of a Newton polytope.
Hence, we can see it as the corner locus of the tropical polynomial
$N_{f}$ as mentioned above. Thus:
\begin{prop}
\textup{\textcolor{black}{\label{prop-new-balanc-cond}new balancing
condition}}\textcolor{black}{{} }- Let $\left(\bar{\Gamma},h,\bar{\gamma}\right)$
be an end-marked curve with $m=\#\bar{\gamma}=\#End\left(\Gamma\right),\:p_{i}=h\left(\gamma_{i}\right),\:1\le i\le m$.
Let $N_{f}\left(x\right)=\max\limits _{\omega\in\Delta\cap\mathbb{Z}^{n}}\left(\langle\omega,x\rangle+c_{\omega}\right)$
be the tropical polynomial dual to the curve. Let $\omega_{i}\in\Delta$
represent the non-compact linear domains of $N_{f}$, i.e. the vertices
of $\Delta$, and $c_{i}:=c_{\omega_{i}}$ the corresponding parameters.
Then 
\[
(2)\qquad\sum\limits _{i=1}^{m}\left\langle \omega_{i}-\omega_{i+1},p_{i}\right\rangle =0\:,
\]
 both the $p_{i}$'s and the $\omega_{i}$'s are ordered clockwise,
$\omega_{i}$ and $\omega_{i+1}$ are the dual vertices adjacent to
$p_{i}$, and $\omega_{m+1}=\omega_{1}$. Furthermore, if $\left(\bar{\Gamma},h\right)$
is simple, then 
\[
dimDef_{\bar{p}}^{ep}\left(\bar{\Gamma},h\right)=rk\left(\bar{\Gamma},h\right)-m+1\:.
\]
\end{prop}
\begin{proof}
For each $i$ we get 
\[
\left\langle \omega_{i}-\omega_{i+1},p_{i}\right\rangle =c_{i+1}-c_{i}
\]
 For $m=\#\bar{\gamma}=\#End\left(\Gamma\right)$ we get $c_{m+1}=c_{1}$
and thus 
\[
\sum\limits _{i=1}^{m}\left\langle \omega_{i}-\omega_{i+1},p_{i}\right\rangle =0
\]
 If $\left(\bar{\Gamma},h\right)$ is simple, we have no other conditions,
and we can arrange the free parameters of the equations as a matrix,
and see that the last condition is dependent on all the others. Therefore
only $\left(m-1\right)$ should be reduced from the rank.
\end{proof}
\begin{rem}
Proposition \ref{prop-new-balanc-cond} actually says two things.
First, it says that there is a dependent condition on the coefficients
$c_{i}$. This means in the case of a simple curve with $m=\#End\left(\Gamma\right)$
set points $p_{i}$, there is zero freedom degrees to the curve bounded
by them (a simple curve is a tree with a rank of $\#End\left(\Gamma\right)-1$
by the Euler characteristic). Furtheremore, if we would like to change
the position of the points $p_{i}$ while keeping the slopes and vertices,
the new balancing condition tells us there is a condition on the points'
positions. Instead of $2\cdot m$ freedom degrees we have only $2\cdot m-1$,
i.e. after moving $m-1$ points in the plane, the last point can only
be moved along a line. This condition is called a \textit{new balancing
condition}, as mentioned above.

See, for example, Figure 2

\includegraphics[scale=0.4]{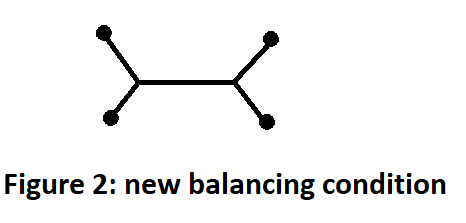}

Here, $dimDef_{\bar{p}}^{ep}\left(\bar{\Gamma},h\right)$ is zero,
because for set points , we cannot chanage anything in the curve between
them. Furthermore, if we want to change the position of the vertices
and keep the same slopes of the edges, and not change the valence
of the vertices, we can change the position of three of them however
we like (in a small range), but the forth can be changed only in one
direction, after the first three were set.
\end{rem}
\begin{defn}
The \textit{expected rank} of an end-marked tropical curve with $m\leq\#End\left(\Gamma\right)$
is $rk_{exp}\left(\bar{\Gamma},h,\bar{\gamma}\right):=rk\left(\bar{\Gamma},h\right)-m$.
\end{defn}
In the case where the end-marked curve is a bounded component, the
expected rank should have been $rk\left(\bar{\Gamma},h\right)-m+1$.
Even though, we keep this definition as is due to the condition to
the vertices' positions, in a manner that will be explained in the
proof of the next theorem.

\section{Rank of plane tropical curves}

In this section we shall study tropical curves in $\mathbb{R}^{2}$.
We shall see the importance of non-nodal vertices; i.e. vertices adjacent
to more than three edges and are not formed by an intersection of
two lines. We shall use a partition of curves by such vertices and
compute ranks to some of the resulting components.
\begin{defn}
Let $T$ be an embedded plane tropical curve. Let $p\in T$ be a vertex
adjacent to 4 or more edges, which is not the intersection of two
edges. We call $p$ a \textbf{non-nodal vertex}.
\end{defn}
$\:$
\begin{defn}
\label{def-bounded-compo}Let $V$ be a set of some vertices of $\Gamma$.
Define a \textit{bounded component} of $\Gamma\backslash V$ to be
a connected component of $\Gamma\backslash V$ such that when adding
the adjacent vertices of $V$, it becomes compact.
\end{defn}
$\:$
\begin{rem}
Let $v^{nn}$ be the non-nodal vertices of $\Gamma$. The embedding
of the connected components of $\Gamma\backslash v^{nn}$, where the
embeddings of the edges adjacent to $v^{nn}$ are extended to infinity,
are called \textit{induced tropical curves}.
\end{rem}
\begin{thm}
\label{thm:bounded comp 1}Let $T$ be an embedded plane tropical
curve and $v^{nn}$ the set of its non-nodal vertices. Replace $\Gamma$
by $\Gamma\backslash h^{-1}\left(v^{nn}\right)$, and let $\left(\bar{\Gamma}_{1},h_{1}\right),...,\left(\bar{\Gamma}_{k},h_{k}\right)$
be the simple parameterizations of the induced (nodal) tropical curces.
Denote by $T_{v^{nn}}$ the set of these induced tropical curves.
Then
\[
rk\left(T\right)\leq rk_{exp}\left(T\right)+\max\left\{ 0,\#\left(\text{bounded components of }\Gamma\backslash h^{-1}\left(v^{nn}\right)\right)-2\right\} .
\]

Furthermore, let M be the matrix of the coefficients of the coordinates
of $v^{nn}$ in the balancing conditions (2) generated by all the
bounded components of $\Gamma\backslash h^{-1}\left(v^{nn}\right)$.
Then
\[
rk\left(T\right)=rk_{exp}\left(T\right)+\#\left(\text{bounded components of }\Gamma\backslash h^{-1}\left(v^{nn}\right)\right)-rkM.
\]
\end{thm}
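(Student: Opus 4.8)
The plan is to realize the equisingular deformation space of $T$ as the solution set of an explicit linear system obtained by gluing the simple pieces, and then to read off the excess $rk(T)-rk_{exp}(T)$ as the dimension of the space of linear relations among the gluing equations, which the two Propositions above control. First I would resolve the nodes to get $h:\Gamma\to T$ and cut $\Gamma$ along $h^{-1}(v^{nn})$; every point of $h^{-1}(\nu)$, $\nu\in v^{nn}$, becomes a marked end of one piece $(\bar\Gamma_i,h_i)$, so each cut-end $e$ carries a primitive normal $n_e=R_{\pi/2}(dh(\tau(e)))$ and must be mapped to the unknown position $v_{j(e)}\in\mathbb{R}^2$ of the non-nodal vertex it came from. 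Taking the $2s$ coordinates of $v=(v_j)$ ($s=\#v^{nn}$) as auxiliary parameters, a deformation of $T$ is exactly a pair $(\theta,v)$ with $\theta\in\prod_i Def^{ep}(\bar\Gamma_i,h_i)$ (of dimension $D=\sum_i rk(\bar\Gamma_i,h_i)$) subject, for each cut-end $e$, to one linear equation $L_e:\ \mathrm{offset}_e(\theta)=\langle n_e,v_{j(e)}\rangle$. Writing $N$ for the number of cut-ends, we get $rk(T)=(D+2s)-(\text{rank of the system }\{L_e\})$, and the value $rk_{exp}(T)=D+2s-N$ is recovered as the case in which the $L_e$ impose independent conditions; I would check this identity directly against the local count of [12].

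Next I would determine the space of linear relations $c=(c_e)$ with $\sum_e c_e L_e\equiv0$, whose dimension is precisely $rk(T)-rk_{exp}(T)$. The condition splits into (i) vanishing in $v$, i.e. $\sum_{e:\,j(e)=j}c_e n_e=0$ at each $\nu_j$, and (ii) vanishing in $\theta$, i.e. $\sum_{e\in i}c_e\,\mathrm{offset}_e\equiv0$ on $Def^{ep}(\bar\Gamma_i,h_i)$ for each piece $i$. Here the Propositions are applied piecewise: for an unbounded piece $m_i<\#End(\Gamma_i)$, so the first Proposition makes the functionals $\mathrm{offset}_e$ independent and (ii) forces $c_e=0$; for a bounded piece $m_i=\#End(\Gamma_i)$, so the second Proposition gives exactly one relation among them, which is $\sum_{e\in i}\mathrm{offset}_e\equiv0$ (the coefficients are all $1$: on a tripod $\sum_e\langle n_e,w\rangle=\langle R_{\pi/2}\sum_e dh(\tau(e)),w\rangle=0$, and the general case is the telescoping in the second Proposition's proof). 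Thus $c_e$ is a constant $\lambda_i$ on each bounded piece and $0$ on the rest, and substituting into (i) yields $\sum_i\lambda_i\big(\sum_{e\in i:\,j(e)=j}n_e\big)=0$ for every $\nu_j$, i.e. $M^{\mathrm T}\lambda=0$. Hence the relation space is $\ker M^{\mathrm T}$, of dimension $b-rkM$ with $b=\#(\text{bounded components})$, giving the exact formula $rk(T)=rk_{exp}(T)+b-rkM$.

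Finally, the stated upper bound is equivalent to $rkM\ge\min\{b,2\}$, and establishing this nondegeneracy is where I expect the real difficulty to lie. The danger is a bounded component all of whose cut-ends meet a single non-nodal vertex: balancing then forces $\sum_{e\in i}n_e=0$, so its row of $M$ vanishes and the bound would already fail for $b=1$. The key step is a geometric lemma using that $T$ is \emph{embedded}: since the edges at a vertex have pairwise distinct primitive directions, a bounded piece cannot close up onto one vertex without making its cut-edges collinear, which is incompatible with embeddedness (it would force the dual cell to degenerate); this should give $rkM\ge1$ whenever $b\ge1$. For $b\ge2$ I would write, for a component meeting two vertices, $B_i(v)=\langle N_i,\,v_{j_1}-v_{j_2}\rangle$ and argue that two distinct embedded bounded components record independent vertex-differences or directions in the plane and so cannot have proportional rows; the constant $2$ in $\max\{0,b-2\}$ is exactly the dimension of $\mathbb{R}^2$, matching the $2$-dimensional space of simultaneous translations of the $v_j$ that always lies in $\ker M$. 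The careful case analysis ruling out every configuration that drops $rkM$ below $\min\{b,2\}$ is the main obstacle.
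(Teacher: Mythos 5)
Your overall strategy is the same as the paper's: cut $\Gamma$ along $h^{-1}(v^{nn})$, treat the coordinates of the non-nodal vertices as $2s$ extra parameters, apply the first Proposition to conclude that unbounded pieces impose independent conditions, apply the second Proposition to extract exactly one relation (the balancing condition (1)) per bounded component, and record these relations in the matrix $M$. For the second assertion your write-up is in fact tighter than the paper's: identifying the excess $rk(T)-rk_{exp}(T)$ with $\dim\ker M^{\mathrm T}=b-rkM$ via the splitting of a relation $\sum_e c_eL_e\equiv 0$ into its $\theta$-part and its $v$-part is a clean formalization of what the paper only asserts (``the rank of $M$ is exactly the number of independent conditions''). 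The one bookkeeping point you defer, namely $rk_{exp}(T)=D+2s-N$, does need to be checked against the definition $\#Vert(S_T)-1-\sum_\delta(\#Vert(\delta)-3)$, but this is routine.

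The genuine gap is in the first assertion, and you have located it precisely but not closed it: the upper bound is equivalent to $rkM\ge\min\{b,2\}$, and your argument for this is only a sketch. The danger you name is real. If a bounded component has all of its cut-ends attached to a single non-nodal vertex $\nu_j$, then by the balancing of that piece $\sum_e n_e=0$, its row of $M$ vanishes, and already for $b=1$ the exact formula would give $rk(T)=rk_{exp}(T)+1$, contradicting the claimed bound; similarly, for $b\ge 2$ one must rule out proportional rows. Your proposed geometric lemma (that embeddedness forbids a bounded piece from closing up onto one vertex, and that two distinct bounded components record independent vertex-differences) is plausible but is exactly the content that must be proved, and you state yourself that the case analysis is not done. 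For what it is worth, the paper's own proof is no better here: it disposes of this point with the assertion ``It is impossible that all of the conditions are dependent on one condition. There should be at least two independent conditions,'' together with an obscure parenthetical about not mapping two edges of $\Gamma$ to the same edge of $T$. So you have correctly isolated the crux of the first inequality; to complete the proof you would need to supply the nondegeneracy argument that the paper itself only gestures at, or else derive the bound independently of the exact formula.
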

\begin{proof}
As each $h_{i}\left(\Gamma_{i}\right)\in T_{v^{nn}}$ is nodal, or
non-singular, by {[}12{]}, $rk\left(\bar{\Gamma}_{i},h_{i}\right)=rk_{exp}\left(\bar{\Gamma}_{i},h_{i}\right)$.
Thus, when attaching $\left(\bar{\Gamma}_{i},h_{i}\right)$ to the
relevant vertices of $v^{nn}$ we get an end-marked curve, with rank
$rk_{exp}\left(\bar{\Gamma}_{i},h_{i},End\left(\bar{\Gamma}_{i}\right)\right)=rk_{exp}\left(\bar{\Gamma}_{i},h_{i}\right)-\#End\left(\bar{\Gamma}_{i}\right)$.
As in our case all the induced curves are simple, that have no conditions,
summing the expected ranks of all the induced end-marked tropical
curves, together with the coordinates of $v^{nn}$, yields $rk_{exp}\left(T\right)$.
I.e. $2\cdot\#v^{nn}+\sum rk_{exp}\left(\Gamma_{i}\right)=rk_{exp}\left(T\right)$.
Notice that if an induced end-marked curve is a bounded component,
then the expected rank should have been increased by one due to the
dependence of the coefficients. But the component poses a condition
to the position of the vertices, and thus we still have the same effect,
in case the new balancing condition is independent.

Each of the bounded components of $\Gamma\backslash h^{-1}\left(v^{nn}\right)$
restricts the position of one vertice acording to the new balancing
condition, by Proposition \ref{prop-new-balanc-cond}. The new balancing
conditions may depend on other new balancing conditions. If there
are only one or two bounded components, there is no problem. But,
if there are more, we should get a correction. In order to have a
dependence, there should be at least two independent conditions, and
so we get
\[
rk\left(T\right)\leq rk_{exp}\left(T\right)+\max\left\{ 0,\:\#\left(\text{bounded components of }\Gamma\backslash h^{-1}\left(v^{nn}\right)\right)-2\right\} .
\]

To be exact, we can determine the dependencies of the conditions by
building a matrix $M$ of the coeficients of the coordinates of $v^{nn}$
in the balancing conditions (2) generated by all the bounded components
of $\Gamma\backslash h^{-1}\left(v^{nn}\right)$. The rank of $M$
is exactly the number of independent conditions. So by extracting
the rank of $M$ from the number of bounded components, we get the
number of dependent conditions, which is the exact correction to $rk_{exp}\left(T\right)$.
\end{proof}
For the convenience of the readers, the following lemma is quoted
here, with its proof, for later use such as in Cor. \ref{cor:param graph equ 1}.
\begin{lem}
\label{lem:zero def for 2 overval}({[}12 - Lemma 2.2{]}) 

(1) If $T\subset\mathbb{R}^{2}$ is non-singular or nodal, then 
\[
rk\left(T\right)=rk_{exp}\left(T\right)
\]

(2) If T is singular, and $S_{T}:\Delta=\delta_{1},...,\delta_{N}$
is the dual graph of T divided to its polygons, then for $d\left(S_{T}\right):=rk\left(T\right)-rk_{exp}\left(T\right)$
we get
\[
(2)\:0\leq2d\left(S_{T}\right)\leq\sum\limits _{m\geq2}\left(\left(2m-3\right)N_{2m}-N'_{2m}\right)+\sum\limits _{m\geq2}\left(2m-2\right)N_{2m+1}-1
\]

where $N_{m},\:m\geq3$, is the number of m-gons in $S_{T}$, and
$N'_{2m},\:m\geq2$, is the number of 2m-gons in S, whose opposite
edges are parallel.
\end{lem}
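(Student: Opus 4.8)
The plan is to turn the rank into a single linear-algebra quantity and read off both assertions from it. A deformation of $T$ preserving its combinatorial type is exactly a choice of heights $c_\omega$ at the vertices $\omega\in Vert(S_T)$ which re-induces the same regular subdivision $S_T$; the lower-hull (convexity) requirements are strict inequalities and hence open, so the only conditions that cut down the dimension are that for each polygon $\delta$ the heights at its $\#Vert(\delta)$ vertices be affinely dependent, i.e. lie on one plane. That is $\#Vert(\delta)-3$ linear equations per cell. Collecting them into one matrix $M$ and dividing out the overall additive constant gives $rk(T)=\#Vert(S_T)-1-rk\,M$, whereas by definition $rk_{exp}(T)=\#Vert(S_T)-1-C$ with $C:=\sum_\delta(\#Vert(\delta)-3)$ the number of rows of $M$. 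Hence $d(S_T)=C-rk\,M$; since the rank of a matrix never exceeds its number of rows, $d(S_T)\ge 0$, which is the left-hand inequality of $(2)$ and reduces $(1)$ to the statement $rk\,M=C$.

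For $(1)$ I would argue case by case. If $T$ is non-singular then $S_T$ is a unimodular triangulation, every cell is a triangle, $C=0$, and $d(S_T)=0$ trivially. If $T$ is nodal then the only non-triangular cells are the parallelograms dual to the nodes, and each contributes the single equation $c_A+c_C=c_B+c_D$ relating the two pairs of opposite vertices (using that the diagonals of a parallelogram bisect each other, so $A+C=B+D$ as lattice points). I would then show these equations are linearly independent by a short incidence argument: a nontrivial dependence among them would require the parallelograms to form a closed chain meeting along shared vertices in a pattern that cannot occur inside a subdivision of a single polygon, so $rk\,M$ equals the number of parallelograms $=C$ and $d(S_T)=0$.

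For $(2)$ the object of study is the corank $d(S_T)=C-rk\,M$, the number of dependent coplanarity conditions. The local input is that the $\#Vert(\delta)-3$ conditions attached to a fixed $\delta$ are always independent among themselves, since the evaluation map from affine functions to heights at the (non-collinear) vertices of $\delta$ is injective; consequently every dependence must couple several cells. I would organise these cross-cell dependences as the relations produced when closing up the cycles of $S_T$, note that each such relation is vectorial (two scalar components, as in the closing-up equations of Theorem~\ref{thm:param p precise}), and show that a genuine dependence can arise only when a cycle closes through a cell of degenerate geometry. The decisive degenerate case is an even polygon whose opposite edges are all parallel, which behaves like the parallelogram of a node and forces one of its conditions to be independent; this is the source of the correction $-N'_{2m}$ and of the separate bookkeeping for even polygons ($N_{2m}$) versus odd ones ($N_{2m+1}$), the latter having no pairs of opposite edges to be parallel. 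The remaining $-1$ I would extract exactly as in the Proposition on end-marked curves with $m=\#End(\Gamma)$ above: among all the closing-up relations there is always one global dependence, namely the total balancing condition $(1)$.

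The hard part will be pinning down the constants, i.e. the factor $2$ in $2d(S_T)$ together with the precise correction $-\sum_{m\ge2}N'_{2m}-1$. This calls for a careful local-to-global count: one assigns to each non-triangular cell a guaranteed number of independent conditions --- roughly half of its total, the factor $2$ entering through the two scalar components of each vectorial closing relation --- then subtracts the extra independence forced by parallel opposite edges, and finally checks that summing these local guarantees over the whole subdivision does not overcount, i.e. that two distinct cells never claim the same independent condition. Controlling this overlap between cells, rather than the per-cell analysis, is where the real difficulty lies; the global balancing relation $(1)$ is what accounts for the single unit by which the naive local count must still be corrected.
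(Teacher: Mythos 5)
Your setup is sound and matches the underlying linear algebra: writing $rk(T)=\#Vert(S_T)-1-rk\,M$ and $d(S_T)=C-rk\,M\geq 0$ correctly disposes of the left-hand inequality, and the non-singular case of (1) is indeed trivial. But from that point on the proposal does not contain the argument that actually carries the lemma. For the nodal case you assert independence of the parallelogram equations via ``a short incidence argument'' ruling out closed chains of parallelograms sharing vertices; such configurations do occur inside subdivisions of a polygon, and no incidence obstruction is exhibited. The mechanism the paper uses is different and is the one piece of machinery the whole lemma rests on: pick a vector $\bar a$ of irrational slope, coorient every edge of every non-triangular cell so that its normal makes an acute angle with $\bar a$, and use the resulting acyclic (hence linearly extendable) ordering of the cells to put the coefficient matrix in triangular form --- each parallelogram has exactly two outward-cooriented edges, which is what makes the conditions independent.

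For part (2) you explicitly defer ``the hard part'' (the factor $2$, the correction $-N'_{2m}$, and the $-1$), and the mechanisms you gesture at are not the right ones. The factor $2$ does not come from closing-up relations being vectorial with two scalar components; it comes from running the ordering argument twice, once with $\bar a$ and once with $-\bar a$, obtaining the two bounds $d(S_T)\leq\sum\max\{|V(\Delta_i)|-e_-(\Delta_i)-2,0\}$ and $d(S_T)\leq\sum\max\{|V(\Delta_i)|-e_+(\Delta_i)-2,0\}$ and adding them; the relation $e_-+e_+=|V(\Delta_i)|$ forces each cell's combined contribution to be at most $|V(\Delta_i)|-3$, and equal to $2m-4$ precisely for $2m$-gons with parallel opposite edges, which is where $-N'_{2m}$ comes from. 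Likewise the final $-1$ is not the global balancing relation of the end-marked-curve Proposition (that concerns a different deformation problem altogether); it is gained by choosing $\bar a$, or a radial ordering centered near the dual vertex of a large cell, so that one specific polygon contributes strictly less than the generic bound. Without the coorientation ordering and the $\pm\bar a$ averaging, the quantitative content of (2) --- which is the entire statement --- remains unproved.
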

\begin{proof}
(1) The meaning of non-singular $T$, or nodal $T$ is that its dual
graph is built from triangles or parallelograms alone. Therefore,
the conditions imposed by the 4-valent vertices of $T$ are independent,
as we shall prove. Taking a vector $\bar{a}\in\mathbb{R}^{2}$ with
an irrational slope can be used in order to coorient each edge of
any parallelogram in the dual graph so that the normal vector creates
an acute angle with $\bar{a}$. Doing that enables us to make a partial
ordering of the parallelograms, that can be completed into a linear
ordering. Each parallelogram has 2 neighboring edges cooriented outside
of the parallelogram, and 2 that are cooriented inside. Thus the coefficients
of the linear conditions imposed by the 4-valent vertices of $T$
can be arranged into a triangular matrix, meaning they are independent.

(2) If $S_{T}$ contains polygons different from triangles and parallelograms,
we define a linear ordering on the set of all non-triangles in the
same way as in (1). Denote by $e_{-}\left(\delta_{i}\right)$ (respectively,
$e_{+}\left(\delta_{i}\right)$) the number of edges of a polygon
$\delta_{i}$ cooriented outside $\delta_{i}$ (respectively, inside).
Passing inductively over the nontriangular polygons $\delta_{i}$,
each time we add at least $\min\left\{ e_{-}\left(\delta_{i}\right)-1,|Vert\left(\delta_{i}\right)|-3\right\} $
new linear conditions independent of all the preceding ones. Thus,
\[
d\left(S_{T}\right)\leq\sum\limits _{i=2}^{N}\left(|Vert\left(\delta_{i}\right)|-3-\min\left\{ e_{-}\left(\delta_{i}\right)-1,|Vert\left(\delta_{i}\right)|-3\right\} \right)
\]
\[
=\sum_{i=2}^{N}\max\left\{ |Vert\left(\delta_{i}\right)|-e_{-}\left(\delta_{i}\right)-2,0\right\} ,
\]

because, for the initial polygon $\delta_{1}$, all $|Vert(\delta_{1})|-3$
imposed conditions are independent. Replacing $\bar{a}$ by $-\bar{a}$,
we obtain
\[
d\left(S_{T}\right)\leq\sum_{i=1}^{N-1}\max\left\{ |Vert\left(\delta_{i}\right)|-e_{+}\left(\delta_{i}\right)-2,0\right\} .
\]
Since

-) the relations $1\leq e_{-}\left(\delta_{i}\right)\leq|Vert\left(\delta_{i}\right)|-1$
and $e_{-}\left(\delta_{i}\right)+e_{+}\left(\delta_{i}\right)=|Vert\left(\delta_{i}\right)|$
yield $\max\left\{ |Vert\left(\delta_{i}\right)|-e_{-}\left(\delta_{i}\right)-2,0\right\} +\max\left\{ |Vert\left(\delta_{i}\right)|-e_{+}\left(\delta_{i}\right)-2,0\right\} $
$\leq|Vert\left(\delta_{i}\right)|-3$

-) for a $2m$-gon with parallel opposite edges we have $e_{-}\left(\delta_{i}\right)=e_{+}\left(\delta_{i}\right)=m$
\[
\Rightarrow\max\left\{ 2m-e_{-}\left(\delta_{i}\right)-2,0\right\} +\max\left\{ 2m-e_{+}\left(\delta_{i}\right)-2,0\right\} =2m-4,
\]

we get
\[
(3)\:2d\left(S_{T}\right)\leq\sum_{m\geq2}\left(\left(2m-3\right)N_{2m}-N'_{2m}\right)+\sum_{m\geq2}\left(2m-2\right)N_{2m+1}.
\]
\\
If among $\delta_{1},...,\delta_{N}$ there is a polygon $\delta_{i}$
whose number of edges is odd and exceeds 3, or a polygon with an even
number of edges and a pair of nonparallel opposite sides, then $\bar{a}$
can be chosen so that $\min\left\{ e_{-}\left(\delta_{i}\right),e_{+}\left(\delta_{i}\right)\right\} \geq2$.
Thus, the contribution of $\delta_{i}$ to the bound for $2d\left(S\right)$
will be $|Vert\left(\delta_{i}\right)|-4$, which allows us to gain
$-1$ on the right-hand side of formula (3), obtaining formula (2).

Finally, assume that all nontriangular polygons in $S$ have an even
number of edges, that their opposite sides are parallel and that there
is $\delta_{i}$ with $|Vert\left(\delta_{i}\right)|=2m\geq6$. The
union of the finite length edges of $T$ is the adjacency graph of
$\delta_{1},...,\delta_{N}$ . We take the vertex corresponding to
$\delta_{i}$, pick a generic point $O$ in a small neighborhood of
this vertex, and orient each finite length edge of $T$ so that it
forms an acute angle with the radius vector from $O$ to the middle
point of the chosen edge. Equipped with such an orientation, the adjacency
graph has no oriented cycles because the terminal point of any edge
is farther from $O$ than the initial point. Thus, we obtain a partial
ordering on $\delta_{1},...,\delta_{N}$ such that, for any $\delta_{k}$
with an even number of edges, at least half of the edges are cooriented
outside. Then we apply the preceding argument to estimate $d\left(S_{T}\right)$
and notice that the contribution of the initial polygon $\delta_{i}$
to such a bound is zero, whereas on the right-hand side of formula
(3) it is at least two. This completes the proof of formula (2).
\end{proof}
\begin{cor}
\label{cor:param graph equ 1}$\left(1\right)$ If the parameterizing
graph $\Gamma$ of an irreducible parameterized plane tropical curve
$\left(\bar{\Gamma},h\right)$ has at most two vertices of valence
>3, then
\[
rk\left(\bar{\Gamma},h\right)=rk_{exp}\left(\bar{\Gamma},h\right)
\]

$\left(2\right)$ If the dual subdivision $S_{T}$ of the Newton polygon
of an embedded plane tropical curve T contains at most two polygons
other than triangles and parallelograms, then
\[
rk\left(T\right)=rk_{exp}\left(T\right).
\]
\end{cor}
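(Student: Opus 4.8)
For (1) I would invoke Theorem \ref{thm:param p precise} directly. At most two vertices of valence $>3$ means $p\le 2$, so $\max\{0,p-2\}=0$ and the theorem gives $rk(\bar\Gamma,h)\le rk_{exp}(\bar\Gamma,h)$. Since the expected rank is computed under the assumption that all imposed relations are independent, any dependence can only enlarge the deformation space, so $rk\ge rk_{exp}$ always; the two inequalities yield equality.

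For (2) I would show that the defect $d(S_T)=rk(T)-rk_{exp}(T)\ge 0$ vanishes. The final estimate (2) of Lemma \ref{lem:zero def for 2 overval} is not enough by itself---a single large non-triangular, non-parallelogram cell already pushes its right-hand side above $1$---so I use the orientation technique from its proof. For any acyclic orientation of the adjacency graph of $S_T$ (which, as noted in the lemma, is the graph of bounded edges of $T$), processing the cells in a compatible order lets each cell $\Delta$ contribute at least $\min\{e_-(\Delta)-1,|V(\Delta)|-3\}$ independent conditions, hence defect $\max\{|V(\Delta)|-e_-(\Delta)-2,0\}$, a source contributing $0$. It therefore suffices to exhibit one acyclic orientation in which every non-triangular cell has in-degree at most $2$ (equivalently $e_-\ge|V|-2$): then $d(S_T)=0$.

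The mechanism is a radial orientation. Fix a point $O$ and set $u\to v$ whenever $|v-O|>|u-O|$; this is acyclic. At a node the four edges lie on two straight lines through its vertex, and on each line the convex function $|\cdot-O|$ has no interior maximum, so at most one of the two colinear neighbours is closer to $O$; hence every parallelogram has in-degree $\le 2$ under any such orientation. With no bad cell this is exactly case (1) of the lemma. With a single bad cell $B$ I place $O$ at the vertex of $T$ dual to $B$: then $B$ is the unique source while all parallelograms keep in-degree $\le 2$ and triangles are irrelevant, so the defect bound is $0$.

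The hard case is two bad cells $B_1,B_2$, where one centre cannot make both sources. I would orient by the gradient of $\rho(x)=\min(|x-O_1|,|x-O_2|)$ with $O_i$ the vertex dual to $B_i$; this is acyclic and makes both $B_1,B_2$ sources. Away from the equidistant ridge, $\rho$ agrees locally with distance to a single centre, so the convexity argument again caps the in-degree of every parallelogram lying strictly on one side at $2$. The only genuine obstacle is a parallelogram one of whose edge-lines crosses the ridge, where the two colinear neighbours sit on opposite sides and could both have smaller $\rho$, forcing in-degree $3$. I expect to remove this by choosing $O_1,O_2$ generically in small neighbourhoods of the two bad vertices---equivalently, replacing $\rho$ by a potential that is strictly convex along every edge and has exactly $B_1,B_2$ as sources---so that $\rho$ is strictly monotone on each straight edge and each antipodal pair yields exactly one out-edge. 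Checking this edge-by-edge monotonicity is the technical crux; granting it, every non-triangular cell has in-degree $\le 2$, so $d(S_T)=0$ and $rk(T)=rk_{exp}(T)$.
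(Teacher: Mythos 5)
For part (1) you lean on Theorem \ref{thm:param p precise}, but that theorem carries the extra hypotheses \emph{trivalent and rational}, and its proof treats $\Gamma$ as a trivalent tree whose $p$ overvalent points are created in the image by $h$ gluing two vertices or a vertex to the interior of an edge. Corollary \ref{cor:param graph equ 1}(1) makes no rationality assumption and speaks of overvalent vertices of the abstract graph $\Gamma$ itself, so the theorem does not apply as stated; at best your deduction covers a special case. The paper instead argues (1) in one line: removing the at most two overvalent vertices leaves at most one bounded component of $\Gamma$, hence at most one balancing condition of type (1), and a single such condition is automatically independent. Your other inequality, $rk\geq rk_{exp}$, is fine --- it is the standard lower bound from [9] and [12] that the paper takes for granted throughout.

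For part (2) your strategy --- refine the ordering argument of Lemma \ref{lem:zero def for 2 overval} so that every non-triangular cell is either a source of the acyclic orientation or has in-degree at most $2$ --- is the right one and is essentially what the paper does (the paper reorders so that each of the two bad polygons has two edges cooriented outside; your sharper observation that a bad cell with five or more vertices must actually be taken as a source, since $e_-\geq 2$ alone only recovers one of its $\#Vert-3$ conditions, is a point the paper glosses over). The difficulty, however, is concentrated exactly where you stop. You concede that with two bad cells the potential $\rho(x)=\min(|x-O_1|,|x-O_2|)$ can give a parallelogram straddling the equidistant ridge in-degree $3$ or $4$, because the minimum of two convex functions can have an interior maximum on the line through the node, so both collinear neighbours may be downhill. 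The proposed repair --- a generic small perturbation of $O_1,O_2$, or an unspecified potential that is strictly convex along every edge yet has both $B_1$ and $B_2$ as sources --- is not carried out, and it is not clear it can be: a small perturbation does not change which side of the ridge a node far from both centres lies on, and strict convexity along every edge-line pulls against having two sources. Since the two-bad-cell, shared-vertex configuration is the only nontrivial case of the corollary, the argument as written does not yet prove it.
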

\begin{proof}
(1) Between two vertices of valence >3 there can be only one bounded
component, and therefore the new balancing condition is independent.
(2) According to the proof of lemma \ref{lem:zero def for 2 overval},
if the two polygons do not share an edge, there is no problem. If
they do, it is enough to order the polygons in such a way so that
each of them will have two neighboring edges cooriented outside.
\end{proof}
\begin{example}
\label{exm:3 higher sing}Consider a plane tropical curve $T$ and
its dual subdivision $S_{T}$ with 3 quadrangles that are not parallelograms,
such that $rk(T)>rk_{exp}\left(T\right)$. In this example (cf. Figure
3), the rank of the curve is 3: 2 for transformations, and 1 for multiplication
by a scalar. The expected rank, however, is only 2:
\[
rk_{exp}\left(T\right):=\#Vert\left(S_{T}\right)-1-\sum_{\delta}\left(\#Vert\left(\delta\right)-3\right)=6-1-3=2
\]

\includegraphics[scale=0.4]{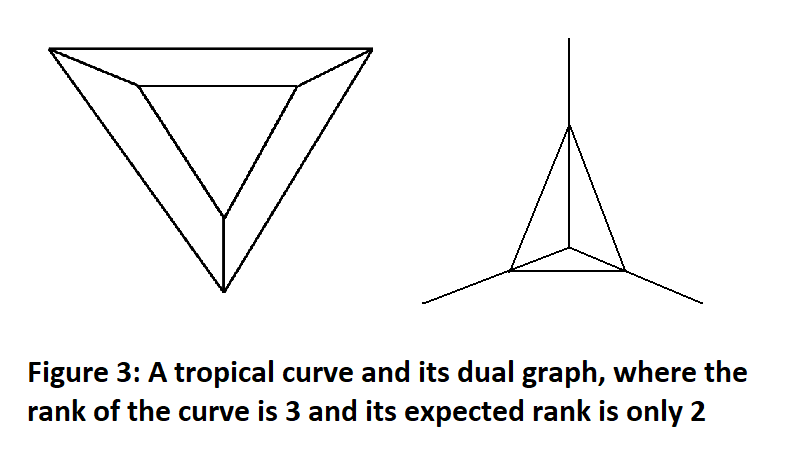}
\end{example}
\begin{thm}
\label{thm:some non triang}Let T be an embedded plane tropical curve,
$S_{T}$ the dual subdivision of its Newton polygon $\Delta$. If
$\delta_{1},...,\delta_{k}$ are all the triangles and parallelograms
in $S_{T}$ and $\delta_{k+1},...,\delta_{l}$ are all the polygons
in $S_{T}$ other than triangles and parallelograms then
\[
rk\left(T\right)\leq\#Vert\left(S_{T}\right)-1-\sum\limits _{\delta\in\left\{ \delta_{1},...,\delta_{k}\right\} }\left(\#Vert\left(\delta\right)-3\right)
\]
\[
-\sum\limits _{i=k+1}^{l}\left(\#Vert\left(\delta_{i}\right)-\max\left\{ 3,\#\left(Vert\left(\delta_{i}\right)\cap\bigcup\limits _{j<i}Vert\left(\delta_{j}\right)\right)\right\} \right),
\]
where $\delta$ runs over the polygons of $S_{T}$.
\end{thm}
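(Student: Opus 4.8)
The plan is to translate $rk(T)$ into a count of independent linear conditions and then exhibit enough of them. Recall that $rk(T)=\#Vert(S_T)-1-r$, where $r$ is the number of independent conditions in the following system: the parameters are the heights $c_{\omega}$ attached to the vertices $\omega\in Vert(S_T)$, and for each polygon $\delta$ of $S_T$ the requirement that its dual vertex of $T$ be a single point amounts to the $\#Vert(\delta)-3$ coplanarity conditions stating that the lifted points $(\omega,c_{\omega})$, $\omega\in Vert(\delta)$, lie on one affine plane. This is exactly the bookkeeping behind $rk_{exp}(T)$ and $d(S_T)$ in Lemma \ref{lem:zero def for 2 overval}. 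Thus it suffices to produce a family of
\[
\sum_{\delta\in\{\delta_{1},\dots,\delta_{k}\}}(\#Vert(\delta)-3)+\sum_{i=k+1}^{l}\bigl(\#Vert(\delta_{i})-\max\{3,N_{i}\}\bigr)
\]
independent conditions, where $N_{i}=\#\bigl(Vert(\delta_{i})\cap\bigcup_{j<i}Vert(\delta_{j})\bigr)$; subtracting this count from $\#Vert(S_T)-1$ then yields the asserted inequality.

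First I would dispatch the triangles and parallelograms. For these I use the coorientation argument from the proof of Lemma \ref{lem:zero def for 2 overval}(1): choosing $\bar{a}$ of irrational slope and coorienting edges linearly orders the parallelograms so that their conditions form a triangular block, hence all $\sum_{\delta\in\{\delta_{1},\dots,\delta_{k}\}}(\#Vert(\delta)-3)$ of them are independent. This step is essential precisely because the theorem charges a parallelogram its full weight $\#Vert(\delta)-3=1$ even when all four of its vertices are shared with earlier polygons, a case the later \emph{fresh variable} argument cannot reach.

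Next I process $\delta_{k+1},\dots,\delta_{l}$ in the given order by a staircase argument. For each $\delta_{i}$ I pick a base of three affinely independent vertices containing as many already-used vertices as possible: all $N_{i}$ of them when $N_{i}\le 2$ (completing with new vertices), or any three of them when $N_{i}\ge 3$. This is always possible because no three vertices of a convex polygon are collinear, so any three vertices are affinely independent. Writing the coplanarity conditions of $\delta_{i}$ in the basis attached to this base, I retain exactly the conditions \textquotedblleft vertex $q$ lies on the base plane\textquotedblright\ for those $q\in Vert(\delta_{i})$ that lie neither in the base nor in any earlier polygon; a short count shows there are $\#Vert(\delta_{i})-\max\{3,N_{i}\}$ of them. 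Each such condition carries a nonzero coefficient on the fresh height $c_{q}$, and $c_{q}$ occurs in no earlier condition (triangle/parallelogram or prior $\delta_{j}$) and in no other retained condition of $\delta_{i}$.

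Finally I assemble independence. Ordering the retained conditions as the triangle/parallelogram block followed by the $\delta_{i}$ in their order, each retained condition of a $\delta_{i}$ has a pivot column $c_{q}$ that vanishes in every earlier retained condition. Hence in any vanishing linear combination the last nonzero non-triangular coefficient is forced to be zero, successively killing all of them; what remains is a combination of triangle/parallelogram conditions, which vanishes by the previous step. This proves the whole family independent and gives the bound. The main obstacle is exactly this interface: making the coorientation-based independence of the parallelograms coexist with the fresh-variable staircase of the larger polygons without hidden cancellations, which is resolved by observing that the non-triangular pivots live in columns disjoint from all triangle/parallelogram conditions.
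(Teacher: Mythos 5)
Your proposal follows the same route as the paper's own proof: count the $\#Vert\left(S_{T}\right)-1$ height parameters, establish independence of the triangle/parallelogram conditions by the coorientation argument of Lemma \ref{lem:zero def for 2 overval}, and then charge each remaining polygon only for conditions pivoting on its genuinely new vertices, which is exactly why only an upper bound results. Your explicit choice of a three-vertex base and the staircase/pivot bookkeeping make rigorous the step the paper states only informally (in particular the $\max\left\{ 3,\cdot\right\} $ correction and the disjointness of the fresh pivot columns from the parallelogram block), but the argument is essentially identical.
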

\begin{proof}
The number of the parameters we deal with is $\#Vert\left(S_{T}\right)-1$.
For the triangles there are no conditions imposed. The 4-valent vertices
of $S_{T}$, dual to parallelograms, impose independent conditions.
To see these we shall use the method showed in {[}12{]} - take a vector
$\bar{a}\in\mathbb{R}^{2}$ with an irrational slope and coorient
each edge of any parallelogram so that the normal vector forms an
acute angle with $\bar{a}$. This coorientation defines a partial
ordering on the set of parallelograms. We can complete this to a linear
ordering. Each parallelogram has two edges cooriented outside, which
means that the coeffcients of the linear conditions imposed by the
4-valent vertices of $S_{T}$ can be arranged into a triangular matrix,
making the conditions independent. This means that each parallelogram
imposes one condition, and together with the triangles, each one imposes
$\#Vert(\delta_{i})-3$ conditions. For the rest of the polytopes
we can determine only the independence of new conditions, represented
by new vertices which were not part of previous polytopes. The number
of such vertices is
\[
\#Vert\left(\delta_{i}\right)-\#\left(Vert\left(\delta_{i}\right)\cap\bigcup\limits _{j<i}Vert\left(\delta_{j}\right)\right)
\]
for $i\geq k+1$.\\
Some of the \textquotedbl{}old\textquotedbl{} vertices of a polytope
may have been considered as conditions to a former set of equations,
and may as well be considered as conditions in this new set of equations
represented by the polytope. These conditions might not be the same,
so an additional reduction should be made. However, we cannot know
this by this procedure, and this is why we get only an upper bound
for the rank. Changing the partial order may produce other bounds.
The minimum of such upper bounds will give the tightest bound.
\end{proof}
\begin{thm}
\label{thm:3 non triang}If the dual subdivision $S_{T}$ of the Newton
polygon of an embedded plane tropical curve T contains precisely three
polygons $\delta_{1}\:\delta_{2},\:\delta_{3}$ other than triangles
and parallelograms, then
\[
rk\left(T\right)=\#Vert\left(S_{T}\right)-1-\sum\limits _{\delta\neq\delta_{1},\delta_{2},\delta_{3}}\left(\#Vert\left(\delta\right)-3\right)
\]
\[
-\sum\limits _{i=1}^{3}\left(\#Vert\left(\delta_{i}\right)-\max\left\{ 3,\:\#\left(Vert\left(\delta_{i}\right)\cap\bigcup\limits _{j<i}Vert\left(\delta_{j}\right)\right)\right\} \right),
\]
where $\delta$ runs over the polygons of $S_{T}$ .
\end{thm}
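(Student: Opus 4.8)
The plan is to read the asserted equality as two inequalities. The bound ``$\leq$'' is exactly Theorem \ref{thm:some non triang} in the case of three exceptional polygons, evaluated for the ordering of $\delta_1,\delta_2,\delta_3$ that minimises its right-hand side, so the whole content is the reverse bound. It is cleanest to work through the defect $d(S_T):=rk(T)-rk_{exp}(T)\geq 0$. Writing $o_i:=\#\left(Vert(\delta_i)\cap\bigcup_{j<i}Vert(\delta_j)\right)$, the stated right-hand side equals $rk_{exp}(T)+\sum_{i=1}^{3}\max\{0,\,o_i-3\}$, so what must be proved is $d(S_T)\geq\sum_{i=1}^{3}\max\{0,\,o_i-3\}$: the indicated number of dependencies among the coplanarity conditions really occurs.

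First I would bound $d(S_T)\leq 1$, which already pins down the scale of the problem. Since any two polygons of a subdivision meet in a single common face, $\#\left(Vert(\delta_i)\cap Vert(\delta_j)\right)\leq 2$ for every pair; hence $o_1=0$, $o_2\leq 2$, and $o_3\leq 4$, so for every ordering $\sum_i\max\{0,\,o_i-3\}=\max\{0,\,o_3-3\}\in\{0,1\}$, and Theorem \ref{thm:some non triang} gives $d(S_T)\leq 1$. The same ceiling is visible parametrically: resolving the parallelograms (nodes) leaves the three exceptional polygons as the $p\leq 3$ vertices of valence $>3$ of a trivalent rational parametrisation, and Theorem \ref{thm:param p precise} gives the correction $\max\{0,\,p-2\}\leq 1$. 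It therefore suffices to separate the cases $d(S_T)=0$ and $d(S_T)=1$.

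If the minimising ordering gives $o_3\leq 3$, the stated right-hand side is $rk_{exp}(T)$ and Theorem \ref{thm:some non triang} forces $d(S_T)\leq 0$; with $d(S_T)\geq 0$ this gives equality. The substantial case is when even the minimising ordering has $o_3=4$, which is equivalent to $\delta_1,\delta_2,\delta_3$ being pairwise adjacent along three pairwise disjoint edges, so that the three cells form a cycle in the adjacency graph; say $\delta_3$ shares the edge $\{a,b\}$ with $\delta_1$ and the disjoint edge $\{c,d\}$ with $\delta_2$. My plan is to produce the missing dependency as the balancing relation (1) of the second Proposition above, applied to this cycle: summing the contributions $\langle R_{\pi/2}(dh(\tau)),p\rangle$ of the edges traversed in going $\delta_1\to\delta_3\to\delta_2\to\delta_1$ returns to the starting linear piece, yielding one relation among the coplanarity conditions of $\delta_1,\delta_2,\delta_3$ that holds \emph{identically} rather than generically. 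Concretely, modelling the condition of a polygon $\delta$ as the form $L_\delta=\sum_{\omega\in Vert(\delta)}\lambda_\omega c_\omega$ attached to an affine dependency $\sum_\omega\lambda_\omega(\omega,1)=0$ of its vertices, the four shared vertices $a,b,c,d$ are exactly what let the coefficients cancel at every vertex, giving a nonzero relation $s_1L_{\delta_1}+s_2L_{\delta_2}+s_3L_{\delta_3}=0$ and hence $d(S_T)\geq 1$. This matches Example \ref{exm:3 higher sing} and is the single rank drop $\#(\text{bounded components})-rk\,M=1$ registered by Theorem \ref{thm:bounded comp 1}.

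The main obstacle is to make this last step rigorous: to show that the cyclic balancing relation really is a nontrivial dependency among $L_{\delta_1},L_{\delta_2},L_{\delta_3}$ for every admissible subdivision, i.e.\ that none of the scalars $s_1,s_2,s_3$ is forced to vanish. Because the vertices come from a regular lattice subdivision and are far from generic, this cannot be settled by a dimension count alone (the naive system of vertex-cancellation equations is overdetermined, and it is precisely the fact that relation (1) holds identically that rescues it). I expect to need the convexity/coorientation ordering underlying Lemma \ref{lem:zero def for 2 overval}, used already for two exceptional polygons in Corollary \ref{cor:param graph equ 1}, to certify that the cycle is essential and the cancellation nondegenerate; equivalently, one may compute the matrix $M$ of Theorem \ref{thm:bounded comp 1} for the three cells directly and check that its rank is one less than the number of bounded components exactly in the three-disjoint-edges case. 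The remaining ingredients — the reduction $d(S_T)\leq 1$ and the vanishing-defect case — are routine.
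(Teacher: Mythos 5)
Your reduction is clean and correct as far as it goes: rewriting the right-hand side as $rk_{exp}(T)+\sum_{i}\max\{0,o_i-3\}$, observing that two cells of a subdivision share at most an edge so that $o_1=0$, $o_2\le 2$, $o_3\le 4$, and using Theorem \ref{thm:some non triang} to dispose of every case except the one where $\delta_1,\delta_2,\delta_3$ are pairwise adjacent along three pairwise disjoint edges. (Your parenthetical appeal to Theorem \ref{thm:param p precise} as a second route to $d(S_T)\le 1$ is not available in general, since that theorem assumes the parameterized curve is rational.) But the proposal stops exactly where the theorem begins: you assert that in the cyclic configuration the balancing relation (1) yields a nontrivial identity $s_1L_{\delta_1}+s_2L_{\delta_2}+s_3L_{\delta_3}=0$, and then you say explicitly that you do not know how to certify that none of the $s_i$ vanishes and that you ``expect to need'' the coorientation ordering or a direct computation of the matrix $M$. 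That missing verification is the entire mathematical content of the statement --- as you yourself note, the vertex-cancellation system is overdetermined, so the existence of the dependency is a genuine geometric fact, not a dimension count --- and a proof that leaves it as an expectation is not a proof.

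For comparison, the paper's stated proof of Theorem \ref{thm:3 non triang} is itself only a deferral (to Theorem \ref{thm:n>3 first}, though the equality actually lives in Theorem \ref{thm:n>3 second}); the real argument is the case analysis in the proof of Theorem \ref{thm:n>3 second}, whose subcase ``all three shared edges lie in the same plane'' is flagged there as the proof for $\mathbb{R}^2$. There the dependency is produced not via the balancing condition (1) but by writing the coplanarity conditions dually as $\langle p_1,z\rangle+c_1=\langle p_2,z\rangle+c_2=\cdots$ for the dual vertices $x,y,z$ of the three cells, normalizing $x=\bar 0$, $c_i=d_i=0$, solving for the $e_i$ in terms of $\langle\bar r,y\rangle$, and checking by direct substitution that equation (7) follows from (4)--(6); the distinction between the parallel and the concurrent position of the three shared edges is handled separately. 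If you want to complete your route instead, the concrete task is to compute the $1\times(\text{number of }v^{nn}\text{-coordinates})$ matrix $M$ of Theorem \ref{thm:bounded comp 1} for the single bounded component created by the cycle $\delta_1\to\delta_3\to\delta_2\to\delta_1$ and show its rank really drops by one --- i.e., reproduce in your language the computation the paper carries out with $x,y,z$.
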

\begin{proof}
This theorem follows by Remark \ref{cor-n=00003D3=00003D=00003D>n=00003D2}.
\end{proof}

\section{Rank of tropical surfaces in $\mathbb{R}^{3}$}

As Newton polytopes appear in dimensions higher than 2 in the same
way, we adjust some of the methods of the 2 dimensional case and use
them here. By using the 1-steleton of tropical surfaces, we will adopt
the parameterized methods as well, and give a formula to one such
case.
\begin{thm}
\label{thm:Algo}Lower and upper bounds to the rank of a tropical
surface in $\mathbb{R}^{3}$ can be calculated according to a given
algorithm.
\end{thm}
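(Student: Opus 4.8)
The plan is to turn the rank computation into a problem of linear algebra over the height parameters $c_\omega$ and then to transport the coorientation-and-ordering technique of Lemma \ref{lem:zero def for 2 overval} from $\mathbb{R}^2$ to $\mathbb{R}^3$. Fix the Newton polytope $\Delta\subset\mathbb{R}^3$ of the surface $T$ and its dual subdivision $S_T$. The free parameters are the heights $c_\omega$ attached to the vertices of $S_T$, so their number is $P:=\#Vert(S_T)-1$ after discarding the global additive constant. Each maximal ($3$-dimensional) cell $\delta$ of $S_T$ is dual to a vertex of $T$, and the requirement that the affine pieces indexed by $Vert(\delta)$ share a common corner is a system of linear conditions on the $c_\omega$; generically such a cell contributes $\#Vert(\delta)-4$ conditions, the number $4$ being the vertex count of a tetrahedron, the generic maximal cell in $\mathbb{R}^3$. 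Writing $c(S_T)$ for the number of linearly independent conditions, we have $rk(T)=P-c(S_T)$, so a lower bound for $rk(T)$ is an upper bound for $c(S_T)$ and conversely. The algorithm produces both by estimating $c(S_T)$ from the two sides.

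For the lower bound I would output the expected rank. A generic vector $\bar a\in\mathbb{R}^3$ with irrational direction coorients every $2$-face of $S_T$ according to the sign of the pairing of its normal with $\bar a$; on the maximal cells this induces a partial order that completes to a linear order, exactly as in the proof of Lemma \ref{lem:zero def for 2 overval}(1). For tetrahedra and for parallelepipeds (the $\mathbb{R}^3$ analogues of triangles and parallelograms) each cell has enough faces cooriented outward that the associated condition matrix is triangular, so these conditions are independent and $c(S_T)$ attains its maximal value. Counting all conditions as independent therefore gives the minimal possible rank, so $rk(T)\geq rk_{exp}(T)=\#Vert(S_T)-1-\sum_\delta(\#Vert(\delta)-4)$; where applicable this is strengthened by the higher-dimensional lower bound of Theorem \ref{thm:n>3 first}.

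For the upper bound I would imitate Theorem \ref{thm:some non triang}. Having fixed an ordering of the cells from a choice of $\bar a$, I process the non-standard cells $\delta_1,\delta_2,\dots$ in order and, for each $\delta_i$, certify independence only for the conditions carried by its vertices that have not already appeared, counting $\#Vert(\delta_i)-\max\{4,\#(Vert(\delta_i)\cap\bigcup_{j<i}Vert(\delta_j))\}$ guaranteed-independent conditions. Summing these over the non-standard cells and adding the simplicial and parallelepipedal contributions gives a lower bound for $c(S_T)$, hence an upper bound for $rk(T)$. Since different choices of $\bar a$, and hence different orderings, give different certified counts, the algorithm loops over a finite set of generic directions and returns the ordering that maximizes the certified number of independent conditions, yielding the sharpest upper bound the method can guarantee.

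The main obstacle is that, unlike the $\mathbb{R}^2$ situation, in $\mathbb{R}^3$ the coorientation order does not force the full condition matrix of a non-simplicial, non-parallelepipedal cell to be triangular: such a cell may meet several predecessors along entire $2$-faces, so many of its vertices are already "old," and whether the corresponding conditions are genuinely dependent on the earlier ones is governed by the affine positions of the faces rather than by pure combinatorics. This is exactly why we obtain an algorithm returning the range $[\,rk_{exp}(T),\ \text{upper bound}\,]$ rather than a closed formula; the gap closes only under the extra hypotheses of Theorem \ref{thm:n>3 second} (at most three singular cells), where the few overlaps can be analyzed directly. The remaining routine point is to verify that all steps---constructing $S_T$, enumerating finitely many generic directions $\bar a$, and building and row-reducing the certified condition matrices---are effective and terminate, which is immediate since every object involved is a finite polyhedral datum.
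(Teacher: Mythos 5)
Your overall strategy coincides with the paper's: both order the maximal cells of the dual subdivision, grow a connected block, obtain an upper bound for the rank by certifying as independent only those conditions carried by vertices that appear for the first time, obtain a lower bound by treating every condition as independent, and then optimize over the finitely many admissible orderings. The paper does the bookkeeping dually, in terms of the four degrees of freedom of each cell's affine function (a cell glued along a $2$-face contributes one new degree of freedom, a cell glued along four or more affinely spanning shared vertices contributes none, and shared vertices beyond those needed become the ``suspicious'' ones deducted for the lower bound), but in the nondegenerate case this agrees with your count of $\#Vert\left(\delta_{i}\right)-\max\left\{ 4,k\right\}$ certified conditions for a cell with $k$ old vertices, and the paper's lower bound likewise reduces to the expected rank. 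One small misstatement: Theorem \ref{thm:n>3 first} is an upper bound (and is stated for $n>3$), so it cannot ``strengthen'' your lower bound.

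There is, however, one concrete flaw in your upper bound. The count $\#Vert\left(\delta_{i}\right)-\max\left\{ 4,k\right\}$ is valid only when the $k$ old vertices of $\delta_{i}$ affinely span $\mathbb{R}^{3}$. If $\delta_{i}$ shares, say, five coplanar vertices with its predecessors, the affine function on $\delta_{i}$ still retains one degree of freedom, so only $\#Vert\left(\delta_{i}\right)-k-1$ of its new vertices are actually pinned down; your formula certifies one condition too many, and the resulting quantity can then fall strictly below the true rank, i.e.\ fail to be an upper bound at all. This is precisely why the paper's steps 3--4 require that the four or more shared vertices ``do not lie in the same plane,'' and why the summand in Theorem \ref{thm:n>3 first} is $\#\left(Vert\left(\delta_{i}\right)\backslash\bigcup_{j<i}Vert\left(\delta_{j}\right)\right)-n+dimConvHull\left(Vert\left(\delta_{i}\right)\cap\bigcup_{j<i}Vert\left(\delta_{j}\right)\right)$ rather than a bare maximum. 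You acknowledge the role of affine position qualitatively in your closing paragraph, but the formula you propose to run does not incorporate it; replacing $\max\left\{ 4,k\right\}$ by $k+3-dimConvHull$ of the old vertex set (which specializes to your expression exactly when those vertices span) repairs the step and brings your algorithm into full agreement with the paper's.
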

\begin{proof}
We shall use the dual graph in this case, in order to show an algorithm
to calculate these bounds. We shall denote $\Delta$ to be the Newton
polygon of the tropical surface $A$, and $S_{f}$ its subdivision.

\textit{step 1} : If $A$ has has only one vertex, and $\Delta$ is
a polygon of $m$ planes, then $A$ is an m-valent vertex with faces
between the edges. This means that four of the space's parameters
are independent while all the others are linearly dependent on those
four parameters. This gives the surface four degrees of freedom.

\textit{step 2} : If $\Delta$ has a subdivision $S_{f}$ (e.g. figure
4), one should choose an arbitrary polygon. This can be done as we
deal with locally dependencies. This first polygon, as the one in
step 1, contributes 4 to the rank (e.g. figure 5). Now we shall take
one of its plane neighbors. As they share a plane, three of the degrees
of freedom of the second polygon have already been considered, and
only one more degree of freedom is left to contribute to the rank
(e.g. figure 6). This step sums up to a contribution of five freedom
degrees to the rank.

\includegraphics[scale=0.4]{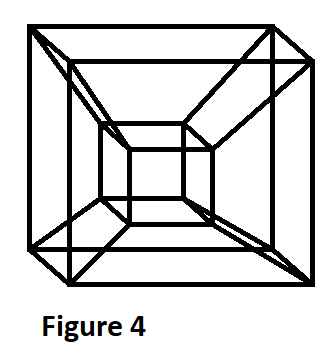}\includegraphics[scale=0.4]{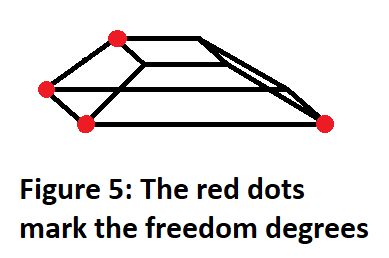}\includegraphics[scale=0.4]{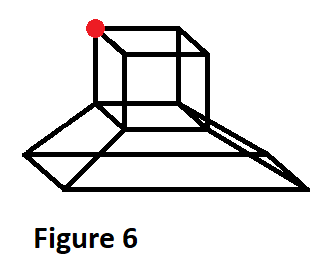}

\textit{step 3} : Now we shall look for polygons which are plane-neighboring
the two previous ones with four vertices or more, where not all of
those vertices lie in the same plane. Those polygons will not add
anything to the rank of the surface, and shall be joined to create
a bloc together with the previous two (e.g. figure 7).

\includegraphics[scale=0.4]{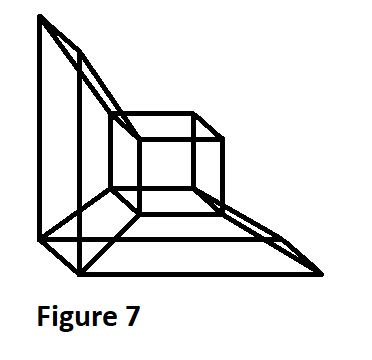}

\textit{step 4} : To the bloc we created on step 3 we can now add
all the polygons which are plane-neighboring the bloc with four or
more vertices that do not lie in the same plane. Again, there will
be no addition to the rank. This step goes on as long as there are
polygons which share with the bloc four or more vertices, as before.
If there are no more polygons of that kind, we go to step 5 (e.g.
figure 4 again).

\textit{step 5} : We look for a polygon plane-neighboring the new
bloc with three vertices. If there is one like that, it shall be added
to the bloc, adding along with it 1 to the rank, as three of its freedom-degrees
have already been considered. Now we shall return to step 4.

\textit{step 6} : If there are no more polygons neighboring the bloc,
we shall reduce 1 from the rank we got. This reduction by one is due
to the isotopy moving the whole graph of the piecewise linear function
\textquotedbl{}up\textquotedbl{} or \textquotedbl{}down\textquotedbl{}
in the four dimensional space, as A is the projection of the corner-locus
of the graph. The only effect on the rank comes from the relationships
between the vertices, and not from their absolute value.

The process is over and we got the upper bound of the rank.

If according to the full order we defined on the polygons, we get
to a polygon whose 5 vertices or more have been already determined,
it means we might have encountered a double condition. A double condition
means a vertex that have to ``fit'' into two or more independent
polytopes of the subdivision, where they might not set the same condition
to that vertex. That means the number of the freedom degrees of the
previous polytope should be reduced so that the conditions on that
vertex will correspond. In such case, $rk\left(A\right)$ should be
decreased by one for every vertex of that kind. As this reduction
is just of suspicious vertices, we get to a lower bound. Changing
the first two polygons in step 2 may produce other bounds. Finally,
the minimum upper bound and the maximum lower bound give the tightest
bounds this procedure allows.
\end{proof}
In figure 4 above, we get that $rk\left(A\right)\leq4$ and the actual
rank is indeed $4$; one for scaling, and three for translations.

$\:$

The next part deals with tropical curves in $\mathbb{R}^{3}$ whose
cycles are planar with no edge going through them, and their vertices'
valence is four or more. The motivation to study these curves is the
1-dimensional skeleta of tropical hypersurfaces in $\mathbb{R}^{3}$.
\begin{rem}
The deformation space of a tropical surface, and the deformation space
of the curve which is derived from the surface as a 1-skeleton, are
the same. The deformations of a face of the tropical hypersurface
are done by deforming its edges. The deformations of 3-dimensional
polytopes are done by deforming their faces, and so on. Therefore,
by induction, the curve which is determined by the intersections of
the polytopes in the tropical hypersurface, actually defines the \textquotedbl{}structure\textquotedbl{}
of the hypersurface and shares the same deformation space with it.

Before our next theorem, some definitions:
\end{rem}
\begin{defn}
\label{def-pre-CL}(1) Let $C$ be a tropical curve in $\mathbb{R}^{3}$
whose minimal cycles are planar and no edge goes through them. Take
only those cycles and close them into faces. If the result consist
a minimal compact polytope, we call the set of the edges of each minimal
compact polytope a \textit{closed volume}. See example in Figure 8.

\includegraphics[scale=0.4]{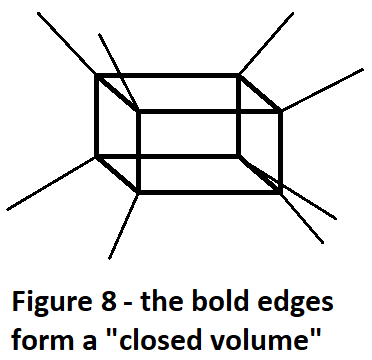}

(2) The \textit{overvalance of a vertex} of a topical curve in $\mathbb{R}^{3}$
is the number of edges adjacent to it minus four. The \textit{overvalance
of a topical curve in $\mathbb{R}^{3}$} is the sum of the overvalaces
of all its vertices. The overvalance of a tropical curve $C$ in $\mathbb{R}^{3}$
is denoted by $ov\left(C\right)$.

(3) Let $T$ be a finite tree of $m$ finite edges, embedded in $\mathbb{R}^{n}$.
We say that $rk\left(T\right)=m+n$.

(4) Let $C$ be a tropical curve in $\mathbb{R}^{3}$ and let $T$
be a maximal tree of $C$. We say $C$ is obtained from $T$ with
the \textit{closed volume property} if the following holds: for each
new closed volume, we have to add an edge and close at least one minimal
cycle in the inductive order of building the curve from the tree.
In other words, there is no closed volume created only by other closed
volumes in the building procedure.
\end{defn}
\begin{thm}
\label{thm:closed volumes}Let C be a tropical curve with valencies
of 4 or more, with no minimal cycles which are not planar, and no
edge goes through those cycles. Let us assume that this curve has
the closed volume property. Then the rank of $C$ is bounded from
below:
\[
rk\left(C\right)\geq\frac{\#End\left(C\right)}{2}+1-\frac{1}{2}ov\left(C\right)+N_{cl}
\]
where $N_{cl}$ is the number of closed volumes.
\end{thm}
\begin{proof}
First, we shall explain $\frac{\#End\left(C\right)}{2}+1-\frac{1}{2}ov\left(C\right)$.
The number of bounded edges in $C$ is $\frac{4\cdot Vert\left(C\right)+ov\left(C\right)-\#End\left(C\right)}{2}$.
In order to get a maximal tree out of $C$, we have to take out $g$
edges, where $g$ is the genus of $C$. Then we get, by the Euler
Characteristic for trees $\left(E=V-1\right)$, that $2\cdot Vert\left(C\right)-\frac{1}{2}\cdot\#End\left(C\right)+\frac{1}{2}ov\left(C\right)-g=Vert\left(C\right)-1$,
which implies that $Vert\left(C\right)=\frac{\#End\left(C\right)-ov\left(C\right)+2g-2}{2}$
and therefore, the number of bounded edged of $C$ from the beginning
of the proof is $\frac{\#End\left(C\right)}{2}+2g-2-\frac{1}{2}\cdot ov\left(C\right)$.
By Definition \ref{def-pre-CL}(3) the rank of the maximal tree $T$
is $rk\left(T\right)=\#\left(\text{bounded edges of }C\right)-g+n$,
where $n$ came from the $n$ dimensional space of translations. Thus,
we get that $rk\left(T\right)=\frac{\#End\left(C\right)}{2}+g-2-\frac{1}{2}\cdot ov\left(C\right)+n$.
As the slope of the edges is fixed, closing each cycle of the curve
from the tree sets a condition on the rank, meaning the length of
one edge of the tree is not free. Since we need $g$ edges in order
to reconstruct the curve from the maximal tree, and since we deal
with $\mathbb{R}^{3}$, we get that $rk\left(C\right)\geq\frac{\#End\left(C\right)}{2}+1-\frac{1}{2}\cdot ov\left(C\right)$
.\\
Regarding $N_{cl}$ - for $N_{cl}=1$: Each of the first $g-1$ minimal
cycles closed from the maximal tree dictates a condition upon the
rank, as explained. Now let us look at the last edge missing in the
closed volume. This last edge is the intersection of two set faces
that goes through two vertices set in this intersection. Therefore,
it does not pose any condition to the curve. For example, remove just
one edge from the closed volume in Figure 8 above. The positions of
all the vertices are set, and the lengths of all the edges are set.
Adding the last edge does not restrict any other edge. Thus, we should
not have subtracted $g$ from the rank of the maximal tree in order
to get to the rank of the curve. Instead, we should have subtracted
only $g-1$, and therefore a correction of one should be added to
the lower bound.\\
For any other closed volume, there is a correction of +1 due to the
closed volume property, which ensures such an edge exists for any
closed volume.
\end{proof}
We should notice that the minimal cycles of the 1-skeleton curve of
a tropical hypersurface in $\mathbb{R}^{3}$ are always like the minimal
cycles in the above deffnition. We can use this fact in order to bound
the rank of this curve by the above theorem. We should also notice
that different hypersurfaces in $\mathbb{R}^{3}$ may have the same
1-skeleton, and therefore their ranks are bounded by the same number.
\begin{rem}
The lower bound for higher dimensional hypersurfaces can be built
in the same manner. For example, 1-skeletons of tropical hypersurfaces
in $\mathbb{R}^{4}$ are at least 5-valent. Taking a hypersurface
in $\mathbb{R}^{4}$ leads to a lower bound according to those considerations:
the number of bounded edges is $\frac{5Vert\left(C\right)-\#End\left(C\right)}{2}$
. The number of vertices is calculated, inductively, by: $\#End\left(C\right)+2g=3Vert\left(C\right)+2+ov\left(C\right)$,
which implies that $Vert\left(C\right)=\frac{\#End\left(C\right)+2g-2-ov\left(C\right)}{3}$
. Combining the two equations leads to $\frac{\#End\left(C\right)}{3}+\frac{5g}{3}-\frac{5}{3}-\frac{ov\left(C\right)}{3}$
bounded edges in the curve and therefore to a maximal tree with rank
$rk\left(Tree\right)=\frac{\#End\left(C\right)}{3}+\frac{2g}{3}-\frac{5}{3}-\frac{ov\left(C\right)}{3}+n$
where $n=4$. Completing the maximal tree into the curve $C$, with
planar cycles, leads to $rk\left(C\right)\geq\frac{\#End\left(C\right)}{3}-\frac{g}{3}+2\frac{1}{3}-\frac{ov\left(C\right)}{3}$
.
\end{rem}
For the next statements we shall use an order on the subdivision,
as proposed in {[}12{]} - take a vector $\bar{a}=\left(a_{1},a_{2},a_{3}\right)\in\mathbb{R}^{3}$
where $\frac{a_{2}}{a_{1}},\:\frac{a_{3}}{a_{1}}$ and $\frac{a_{3}}{a_{2}}$
are irrational. This enables us to define an order on the polytopes
such that other than the first polytope, all the polytopes have at
least one vertex in common with their predecessors. Similarly, any
such order is applicable. Notice that Theorem \ref{thm:n>3 first}
refers higher dimensions as well.
\begin{thm}
\label{thm:n>3 first}Let X be a tropical hypersurface in $\mathbb{R}^{n}$,
$n\geq3$, with the dual subdivision $S_{X}$ of its Newton polytope
$\Delta$. Let $\delta_{1},...,\delta_{k}$ be all the n-polytopes
of $S_{X}$ ordered as stated above. Then
\[
rk\left(X\right)\leqslant\#Vert\left(S_{X}\right)-\sum\limits _{i=1}^{k}\Biggl(\#\biggl(Vert\left(\delta_{i}\right)\backslash\bigcup\limits _{j<i}Vert\left(\delta_{j}\right)\biggl)
\]
\[
-n+dimConvHull\biggl(Vert\left(\delta_{i}\right)\cap\bigcup\limits _{j<i}Vert\left(\delta_{j}\right)\biggl)\Biggl).
\]
\end{thm}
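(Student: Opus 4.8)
The plan is to realize $rk(X)$ as a codimension count in the space of coefficient parameters and to bound \emph{from below} the rank of the linear system cutting out the equisingular locus. I would write $c_\omega$ for the coefficient (height) attached to each vertex $\omega$ of $S_X$, so the parameter space has dimension $\#Vert(S_X)$, with a global additive shift accounting for the $-1$. Preserving the combinatorial type of the dual subdivision amounts to keeping each full-dimensional non-simplex cell flat, i.e.\ to the linear conditions described below together with open inequalities that do not change the dimension. Hence $rk(X)=\#Vert(S_X)-1-r$, where $r$ is the rank of the full system of flatness conditions, and it suffices to prove $r\geq\sum_{i=1}^{k}C_i$, where $C_i:=\#\bigl(Vert(\delta_i)\setminus\bigcup_{j<i}Vert(\delta_j)\bigr)-n+\dim\mathrm{ConvHull}\bigl(Vert(\delta_i)\cap\bigcup_{j<i}Vert(\delta_j)\bigr)$ is the summand in the statement.

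First I would describe the conditions cell by cell. Since $\delta_i$ is an $n$-dimensional polytope, its vertex set $V_i:=Vert(\delta_i)$ affinely spans $\mathbb{R}^n$, and the lifted points $(\omega,c_\omega)$, $\omega\in V_i$, are coplanar in $\mathbb{R}^{n+1}$ exactly when $(c_\omega)_{\omega\in V_i}$ is orthogonal to every affine dependency of $V_i$, i.e.\ every tuple $(a_\omega)$ with $\sum a_\omega\omega=0$ and $\sum a_\omega=0$. The space of affine dependencies of $V_i$ has dimension $\#V_i-n-1$; for simplices this is zero, which is exactly why simplices impose no condition and are excluded from the sum. Thus each non-simplex $\delta_i$ contributes the linear conditions $\sum_{\omega\in V_i}a_\omega c_\omega=0$, one per basis dependency.

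Next I would process $\delta_1,\dots,\delta_k$ in the given order and, at step $i$, extract the conditions that are provably independent of everything produced so far. Set $O_i:=V_i\cap\bigcup_{j<i}V_j$, $N_i:=V_i\setminus\bigcup_{j<i}V_j$, and $d_i:=\dim\mathrm{ConvHull}(O_i)$. Every earlier condition involves only coefficients indexed by $\bigcup_{j<i}V_j$, hence none involves a variable $c_\omega$ with $\omega\in N_i$. I would then consider the projection of the affine-dependency space of $V_i$ onto its $N_i$-coordinates: its kernel is precisely the dependencies supported on $O_i$, a space of dimension $\#O_i-d_i-1$ (with the convention $\dim\mathrm{ConvHull}(\emptyset)=-1$), so by rank--nullity
\[
\dim(\text{image})=(\#V_i-n-1)-(\#O_i-d_i-1)=\#N_i-n+d_i=C_i .
\]
Choosing $C_i$ dependencies whose $N_i$-projections are linearly independent yields $C_i$ conditions whose restrictions to the fresh variables are independent.

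The key (and only delicate) step is the independence of the accumulated system. Suppose a linear relation holds among the previously selected conditions and the $C_i$ new ones; restricting it to the $N_i$-coordinates annihilates every old condition and leaves an independent combination of the new ones, forcing all new coefficients to vanish, after which the remaining relation among the old conditions vanishes by the inductive hypothesis. Hence the running set stays independent and $r\geq\sum_{i=1}^{k}C_i$. Finally, because $V_i$ spans $\mathbb{R}^n$ one has $d_i\geq n-\#N_i$, so each $C_i\geq0$ and no truncation is needed, which also explains the earlier remark that a correction is suppressed only when too few free parameters remain. Substituting into $rk(X)=\#Vert(S_X)-1-r$ gives the claimed inequality. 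I expect the main obstacle to be exactly this independence-from-earlier-cells bookkeeping, namely justifying that the dependencies supported on old vertices (of dimension $\#O_i-d_i-1$) are precisely the ones we cannot certify, which is where the convex-hull dimension $d_i$ enters.
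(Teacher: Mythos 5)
Your proof is correct and follows the same strategy as the paper's own argument: order the non-simplex $n$-cells, charge each one the conditions coming from its new vertices, and correct by the affine-dimension deficiency of the vertex set it shares with its predecessors. Your rank--nullity computation on the space of affine dependencies, together with the observation that conditions touching a fresh variable $c_\omega$, $\omega\in N_i$, are automatically independent of everything produced earlier, makes rigorous the counting that the paper's proof only states informally, so this is the same approach carried out with more care rather than a genuinely different route.
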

\begin{proof}
The right hand side of the inequality can be presented in the following
way:
\[
\#Vert\left(S_{X}\right)-1+n+1-\#Vert\left(\delta_{1}\right)-
\]
\[
\sum\limits _{i=2}^{k}\Biggl(\#\biggl(Vert\left(\delta_{i}\right)\backslash\bigcup\limits _{j<i}Vert\left(\delta_{j}\right)\biggl)-n+dimConvHull\biggl(Vert\left(\delta_{i}\right)\cap\bigcup\limits _{j<i}Vert\left(\delta_{j}\right)\biggl)\Biggl).
\]

The number of parameters we deal with is $\#Vert\left(S_{X}\right)-1$,
i.e. each vertex denote a hypersurface of dimension $n$ of $N_{f}$,
but setting one hypersurface does not change the corner locus. The
first polytope along the order we have defined has $n+1$ freedom
degrees. All its other vertices represent conditions, meaning these
vertices do not represent freedom degrees of the hypersurface, but
are forced to fit with the previous vertices. Now we continue to the
next polytope. Any new vertex of this polytope is considered as a
new condition. Thus, we reduce the number of the polytope's new vertices
as new conditions. This reduction is noted by
\[
\#\left(Vert\left(\delta_{i}\right)\backslash\bigcup\limits _{j<i}Vert\left(\delta_{j}\right)\right)
\]

A correction to this reduction is needed if the vertices this polytope
shares with its predecessors are positioned in a space with a dimension
less than $n$. A vertex in the tropical hypersurface is defined by
an $n$ dimensional polytope in the dual graph. Thus, some of the
new vertices of the polytope actually do not represent conditions,
and therefore should be added again. The number of such vertices is
exactly that which will make the $n$-dimensional demand. The others
are still considered as conditions. This correction is noted by
\[
-n+dimConvHull\left(Vert\left(\delta_{i}\right)\cap\bigcup\limits _{j<i}Vert\left(\delta_{j}\right)\right).
\]

Take for example the dual graph in Figure 4, and follow by Figure
5,6 and 7.

Some of the \textquotedbl{}old\textquotedbl{} vertices of a polytope
may have been considered as conditions to former set of equations,
and may be considered as conditions on this new set of equations represented
by the polytope. These conditions might not be the same, and therefore,
an additional reduction should be made. However, we cannot know this
by the procedure itself, which is why we get only an upper bound for
the rank. Changing the order may produce other bounds. The minimum
upper bound gives the tightest bound.
\end{proof}
\begin{thm}
\label{thm:n>3 second}Let X be a tropical hypersurface in $\mathbb{R}^{n},\:n=3$,
with the dual subdivision $S_{X}$ of its Newton polytope $\Delta$.
If $S_{X}$ contains at most three n-polytopes $\delta_{i},\:0\leq i\leq i_{0}\leq3$,
other than simplexes, then
\[
rk\left(X\right)=\#Vert\left(S_{X}\right)-\sum\limits _{i=1}^{i_{0}}\Biggl(\#\biggl(Vert\left(\delta_{i}\right)\backslash\bigcup\limits _{j<i}Vert\left(\delta_{j}\right)\biggl)
\]
\[
-n+dimConvHull\biggl(Vert\left(\delta_{i}\right)\cap\bigcup\limits _{j<i}Vert\left(\delta_{j}\right)\biggl)\Biggl)
\]
\end{thm}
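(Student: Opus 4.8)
The inequality $rk(X) \leq (\text{right-hand side})$ is precisely Theorem \ref{thm:n>3 first} specialized to $k = i_0 \leq 3$, so the whole task is to establish the reverse inequality. By the proof of that theorem, the bound can fail to be sharp only if some \emph{old} vertex of a polytope---one already shared with an earlier polytope---imposes, inside the new polytope, a relation not implied by the relations counted so far; such an uncounted relation would force a further reduction and make the inequality strict. The plan is to show that for at most three non-simplex polytopes the ordering can always be chosen so that no such uncounted relation survives, whence the bound of Theorem \ref{thm:n>3 first} is attained.

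I would argue this by lifting the coorientation technique of Lemma \ref{lem:zero def for 2 overval} from $\mathbb{R}^2$ to $\mathbb{R}^n$. The parameters are the heights $c_\omega = Val(A_\omega)$ at the $\#Vert(S_X)$ vertices of $S_X$, with one overall additive constant free. Each non-simplex $\delta_i$ demands that the heights of all its vertices lie on a common hyperplane in $\mathbb{R}^{n+1}$; the term $\#(Vert(\delta_i)\setminus\bigcup_{j<i}Vert(\delta_j)) - n + dimConvHull(Vert(\delta_i)\cap\bigcup_{j<i}Vert(\delta_j))$ records exactly the relations pinned by the new vertices once the shared face, of dimension $dimConvHull(\cdot)$, has been fixed by earlier data. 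Choosing a generic $\bar a \in \mathbb{R}^n$, coorienting every facet so that its outward normal forms an acute angle with $\bar a$, and passing to a linear refinement of the induced partial order on $\delta_1,\delta_2,\delta_3$, these relations fall into fresh columns indexed by the last new vertex of each polytope, producing a block-triangular and hence full-rank system.

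For $i_0 \leq 2$ this is immediate, as in Corollary \ref{cor:param graph equ 1}: a single non-simplex polytope contributes only its own new relations, and two polytopes can be ordered so that the later one still presents a new vertex beyond the shared face. The decisive case is $i_0 = 3$, where the three polytopes may pairwise share faces. Here I would invoke the ``at least two independent conditions'' phenomenon already exploited in Theorems \ref{thm:bounded comp 1} and \ref{thm:param p precise}: among the relations produced by three polytopes one can always secure two independent ones, while the $dimConvHull$ correction has by construction absorbed every coincidence caused by a shared low-dimensional face, so the third block contributes only genuinely new relations and no hidden dependency remains.

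The main obstacle is the verification, for $i_0 = 3$ and arbitrary $n$, that when the polytopes share faces of dimension close to $n-1$ one can still select $\bar a$---equivalently a generic point $O$ near the vertex of the adjacency complex dual to the polytope with the largest shared face---so that at least half of that polytope's facets are cooriented outward and the induced order on the three polytopes is acyclic. This is the exact higher-dimensional analogue of the final paragraph of Lemma \ref{lem:zero def for 2 overval}, and the bound $i_0 \leq 3$ is precisely what prevents a directed cycle in the adjacency complex: with four or more non-simplex polytopes such a cycle can form, the ordering trick breaks, and only the inequality of Theorem \ref{thm:n>3 first} remains. Minimizing over admissible orderings, the upper bound coincides with this construction and equality follows.
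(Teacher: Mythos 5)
You correctly identify that the content of the theorem is the reverse inequality $rk(X)\geq\text{RHS}$, but the tool you deploy for it does not do that job. The coorientation-and-triangular-matrix argument of Lemma \ref{lem:zero def for 2 overval} establishes that a chosen collection of conditions is \emph{independent}; that is exactly how the upper bound $rk(X)\leq\text{RHS}$ of Theorem \ref{thm:n>3 first} is obtained, and re-running it here only reproves that direction. The lower bound requires the opposite statement: that the conditions \emph{not} counted by the formula --- those coming from ``old'' vertices which a polytope $\delta_i$ shares with its predecessors beyond an affine basis of the shared face --- are linear consequences of the conditions already counted. Your proposal asserts that ``the $dimConvHull$ correction has by construction absorbed every coincidence'' and that ``no hidden dependency remains,'' but this is precisely the claim to be proven, and no ordering of the three polytopes makes it automatic. (Your remark that $i_0\leq 3$ prevents a directed cycle is also not the operative mechanism: a generic linear functional yields an acyclic coorientation for any number of polytopes, and three pairwise-adjacent polytopes already form a $3$-cycle in the adjacency graph.)

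The paper's proof attacks the missing direction head-on, for $n=3$, by writing the affine-linear equations on the heights explicitly: if the three non-simplex polytopes are dual to vertices $x,y,z$ of the hypersurface and pairwise share edges or faces, one must show that the one surviving uncounted equation (the passage from (6) to (7), from (10) to (11), from (14) to (15)) is implied by the counted ones. This is done by a case analysis on the mutual position of the shared edges/faces (parallel, concurrent, coplanar, edge--edge--edge, face--edge--edge, face--face--edge, face--face--face), normalizing $x=\bar 0$ and the constants to zero, and computing $e_1,e_2$ in terms of inner products such as $\langle\bar r,y\rangle$ and $\langle\bar r,z-y\rangle$; in several subcases the conclusion is instead that the configuration cannot arise as the dual subdivision of a hypersurface with three high-valence vertices at all. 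None of this computation, nor the impossibility arguments, is recoverable from the coorientation technique, so your proof has a genuine gap at its central step.
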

\begin{proof}
Each of the higher valence vertices in the tropical graph is dual
to a polytope with five or more vertices in the dual subdivision of
$S_{X}$. Such polytopes shall be later on referred to as \textquotedbl{}OV
polytopes\textquotedbl{}. A problem might arise due to \textquotedbl{}double
conditions\textquotedbl{}, i.e.: conditions that suit to one polytope,
but not to its edge/face neighbor. If no two OV polytopes share vertices,
the rank can be calculated exactly, as conditions appear only on those
three polytopes. If only two share some vertices, the rank can also
be calculated exactly, as the convex hull of the shared vertices is
of a dimension less than that of $\Delta$. Furthermore, we shall
see that even if any two of the OV polytopes share vertices, the rank
can be calculated exactly. If one such pair shares only one vertex,
all the conditions are independent.

The problem arises when each pair shares an edge or a face. The proof
will therefore distinguish between the possible cases in which the
three polytopes are paired, and we will see in each case why the conditions
we encounter in one polytope, also fit to its neighbor. We start with
the cases arise when each pair of OV polytopes share an edge, and
later we continue with cases where one pair or more share a face.

Let us assume, for start, that each pair of the OV polytopes shares
an edge with each of its OV neighbors, i.e.: we have three edges where
each edge belongs to two OV polytopes. Let us refer to those edges
by their boundary vertices $R=\left\{ r_{1},r_{2}\right\} ,\:P=\left\{ p_{1},p_{2}\right\} ,\:Q=\left\{ q_{1},q_{2}\right\} $.
If one of the three edges does not lie in the same plane with one
of the other two edges, then the OV polytopes adjacent to these two
edges do not have a double condition, simply because none of the four
vertices represents a condition. 

Now let us continue to the case where all the three edges are parallel.
An example of such case can be found in Figure 9, where the purple
edges are the shared edges, and the space between the OV polytopes
can be triangulated to simplexes.

\includegraphics[scale=0.4]{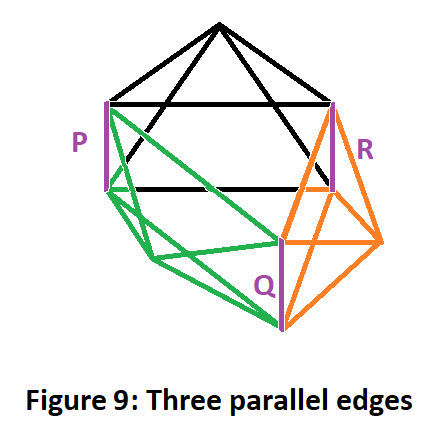}

In that case, by the definition of the tropical polynomial, we have:

\[
(4)\:\left\langle q_{1},x\right\rangle +d_{1}=\left\langle q_{2},x\right\rangle +d_{2}=\left\langle p_{1},x\right\rangle +c_{1}=\left\langle p_{2},x\right\rangle +c_{2}
\]

for some $x\in\mathbb{R}^{3}$ (meaning the four vertices belong to
the same dual polytope),
\[
(5)\:\left\langle q_{1},y\right\rangle +d_{1}=\left\langle q_{2},y\right\rangle +d_{2}=\left\langle r_{1},y\right\rangle +e_{1}=\left\langle r_{2},y\right\rangle +e_{2}
\]

for some $y\in\mathbb{R}^{3}$, and

\[
(6)\:\left\langle p_{1},z\right\rangle +c_{1}=\left\langle p_{2},z\right\rangle +c_{2}=\left\langle r_{1},z\right\rangle +e_{1}
\]

for some $z\in\mathbb{R}^{3}$. We shall now prove that the last equation
can be extended to the following equation:
\[
(7)\:\left\langle p_{1},z\right\rangle +c_{1}=\left\langle p_{2},z\right\rangle +c_{2}=\left\langle r_{1},z\right\rangle +e_{1}=\left\langle r_{2},z\right\rangle +e_{2}
\]

for some $z$. I.e. that the last condition on $e_{2}$ fit as a result
of the previous conditions, and thus does not impose a new condition
and does not ``break'' the third dual polytope into two. 

Reducing (4) from (6) and (4) from (5) gives, respectively:

$\left\langle p_{1},z-x\right\rangle =\left\langle p_{2},z-x\right\rangle \Rightarrow\left\langle p_{1}-p_{2},z-x\right\rangle =0$

$\left\langle q_{1},y-x\right\rangle =\left\langle q_{2},y-x\right\rangle \Rightarrow\left\langle q_{1}-q_{2},y-x\right\rangle =0$

$\Rightarrow\left\langle r_{1}-r_{2},z-y\right\rangle =0$, because
$P||Q||R$. Adding (5) to the last equation leads to (7). Thus, as
$e_{2}$ does not add a new condition, the formula above is exact.

$\:$

Now we shall look at the case where each pair of edges lies in a plane,
but no pair is made of parallel edges (cf. Figure 10), and notice
the figure does not represent a polytope in the dual graph, just the
relations between the shared edges.

\includegraphics[scale=0.4]{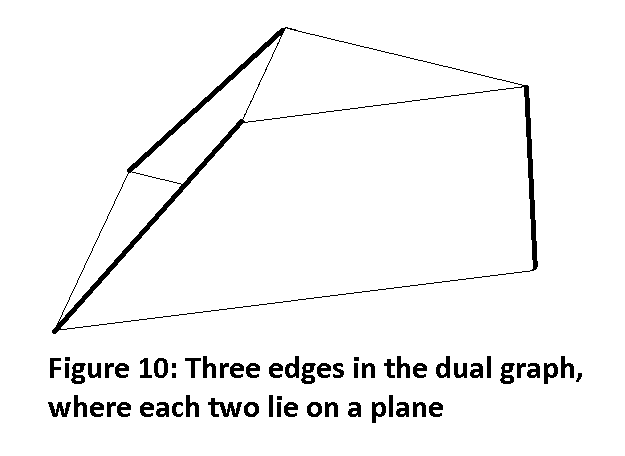}
\[
\]
 Therefore, the constellation of the edges is such that they can be
extrapolated into lines which intersect at a point that we shall call
$"a"$. So we have:\\
 
\[
p_{1}=a+\overline{p},\:p_{2}=a+a'\overline{p},\:1\neq a'\in\mathbb{R}
\]
\[
r_{1}=a+\overline{r},\:r_{2}=a+b'\overline{r},\:1\neq b'\in\mathbb{R}
\]
\[
q_{1}=a+\overline{q},\:q_{2}=a+c'\overline{q},\:1\neq c'\in\mathbb{R}
\]
And we have (4), (5), (6) as before, where we need to prove (7).\\
As we may shift our curve $X$ freely in $\mathbb{R}^{3}$, we can
choose $x=(0,0,0)$ and so we get: 
\[
\left\langle p_{1},(0,0,0)\right\rangle +c_{1}=\left\langle a+\overline{p},(0,0,0)\right\rangle +c_{1}=c_{1}=c_{2}=d_{1}=d_{2}
\]
If we add a constant $k$ to all the monomials of the tropical polynomial
defining $X$, we shall get the same curve $X$. Therefore, we may
choose $c_{1}=c_{2}=d_{1}=d_{2}=0$. So far with $x$.\\
 Now we shall look at $y$. We know that 
\[
\left\langle q_{1},y\right\rangle =\left\langle q_{2},y\right\rangle \Rightarrow\left\langle q_{1}-q_{2},y\right\rangle =0\Rightarrow\left\langle \overline{q},y\right\rangle =0
\]
 (the $d$'s equal to 0). But we also know that $\left\langle q_{1},y\right\rangle =\left\langle a+\overline{q},y\right\rangle =$
$\left\langle a,y\right\rangle +\left\langle \overline{q},y\right\rangle =\left\langle a,y\right\rangle $
and therefore $\left\langle a,y\right\rangle =\left\langle r_{1},y\right\rangle +e_{1}=\left\langle a+\overline{r},y\right\rangle +e_{1}$
$=\left\langle a,y\right\rangle +\left\langle \overline{r},y\right\rangle +e_{1}$,
which implies that\textbf{
\[
e_{1}=-\left\langle \overline{r},y\right\rangle 
\]
}and doing the same with $r_{2}$ and $e_{2}$ leads us to 
\[
e_{2}=-b'\left\langle \overline{r},y\right\rangle .
\]
So far with $y$\textbf{.}\\
Now, let us recall (6) for $z$: $\left\langle p_{1},z\right\rangle =\left\langle p_{2},z\right\rangle =\left\langle r_{1},z\right\rangle +e_{1}$.
And so $\left\langle p_{1},z\right\rangle =$ $\left\langle a+\overline{p},z\right\rangle =\left\langle a,z\right\rangle +\left\langle \overline{p},z\right\rangle $
which equals $\left\langle p_{2},z\right\rangle =\left\langle a+a'\overline{p},z\right\rangle =\left\langle a,z\right\rangle +$
$a'\left\langle \overline{p},z\right\rangle $ and therefore 
\[
(1-a')\left\langle \overline{p},z\right\rangle =0\Rightarrow\left\langle \overline{p},z\right\rangle =0\Rightarrow\left\langle p_{1},z\right\rangle =\left\langle a,z\right\rangle .
\]
This leads to $\left\langle a,z\right\rangle =\left\langle p_{1},z\right\rangle =\left\langle r_{1},z\right\rangle +e_{1}=\left\langle a+\overline{r},z\right\rangle -\left\langle \overline{r},y\right\rangle =$
$\left\langle a,z\right\rangle +\left\langle \overline{r},z-y\right\rangle $,
which implies that $\left\langle \overline{r},z-y\right\rangle =0$\\
 It is now left to prove is that $\left\langle r_{2},z\right\rangle +e_{2}$
equals to (6). In other words, we need to prove $\left\langle r_{2},z\right\rangle +e_{2}=\left\langle a,z\right\rangle $.
The prove is as follows. \\
$\left\langle r_{2},z\right\rangle +e_{2}=\left\langle a+b'\overline{r},z\right\rangle -b'\left\langle \overline{r},y\right\rangle =\left\langle a,z\right\rangle +b'\left\langle \overline{r},z-y\right\rangle =\left\langle a,z\right\rangle $.\\
 \\
The last case is where all the three edges lie in the same plane,
but are not parallel (it is impossible in that case that only two
are parallel, because then it is impossible to have polytopes which
do not intersect each other). Let
\[
p_{1}=a+\bar{p},p_{2}=a+\alpha\bar{p},
\]
\[
q_{1}=a+\bar{q},q_{2}=a+\beta\bar{q},
\]
\[
r_{1}=a+\gamma\bar{q}+\bar{r},r_{2}=a+\gamma\bar{q}+\delta\bar{r}.
\]

Again, we shall set $x=\bar{0}$ and $c_{i}=\text{d}_{i}=0$ and therefore
$\left\langle \bar{q},y\right\rangle =\left\langle \bar{p},z\right\rangle =0$.\\
 We know $\left\langle r_{1},y\right\rangle +e_{1}=\left\langle q_{1},y\right\rangle =\left\langle a,y\right\rangle +\left\langle \bar{q},y\right\rangle $\\
 and also $\left\langle r_{1},y\right\rangle +e_{1}=\left\langle a,y\right\rangle +\gamma\left\langle \bar{q},y\right\rangle +\left\langle \bar{r},y\right\rangle +e_{1}$,
and therefore $e_{1}=-\left\langle \bar{r},y\right\rangle $. In the
same manner we get $e_{2}=-\delta\left\langle \bar{r},y\right\rangle $.
Furthermore, we know 
\[
\left\langle p_{1},z\right\rangle =\left\langle r_{1},z\right\rangle +e_{1}=\left\langle a,z\right\rangle +\gamma\left\langle \bar{q},z\right\rangle +\left\langle \bar{r},z-y\right\rangle 
\]
 and $\left\langle p_{1},z\right\rangle =\left\langle a,z\right\rangle +\left\langle \bar{p},z\right\rangle =\left\langle a,z\right\rangle $,
what leads to $\gamma\left\langle \bar{q},z\right\rangle =-\left\langle \bar{r},z-y\right\rangle $.
As $r_{1}-r_{2}=\left(1-\delta\right)\bar{r}$ forms the edge between
two OV polytopes, which are dual to $y$ and $z$, we get that $\left\langle \bar{r},z-y\right\rangle =0$
due to the characteristics of the duality. As $\left\langle \bar{p},z\right\rangle =0$,
and the three edged are in the same plane but are not parallel, we
get that $\left\langle \bar{q},z\right\rangle \neq0$. Thus $\gamma=0$,
and $\left\langle r_{1},z\right\rangle +e_{1}=\left\langle a,z\right\rangle $.
Now we shall look at $\left\langle r_{2},z\right\rangle +e_{2}=$
$\left\langle a,z\right\rangle +\gamma\left\langle \bar{q},z\right\rangle +\delta\left\langle \bar{r},z-y\right\rangle =\left\langle a,z\right\rangle =\left\langle r_{1},z\right\rangle +e_{1}$.

Notice, that this last case of three edges lying in the same plane,
but are not parallel actually proves Theorem \ref{thm:3 non triang}\textcolor{black}{.
See Remark \ref{cor-n=00003D3=00003D=00003D>n=00003D2}.}

This is the proof for the case where each pair of our three OV polytopes
shares an edge. 

$\:$

Now we shall look at the case where two of the OV polytopes share
a face, spanned by $P=\{p,p+\bar{p},p+\bar{\bar{p}}\}$, and each
other pair shares an edge $Q=\{q_{1},q_{2}\},R=\{r_{1},r_{2}\}$.\textbf{
}We now have a new set of equations: 
\[
(8)\:\left\langle p,x\right\rangle +c_{1}=\left\langle p+\bar{p},x\right\rangle +c_{2}=\left\langle p+\bar{\bar{p}},x\right\rangle +c_{3}=\left\langle q_{1},x\right\rangle +d_{1}=\left\langle q_{2},x\right\rangle +d_{2}
\]
for some $x\in\mathbb{R}^{3}$. 
\[
(9)\:\left\langle q_{1},y\right\rangle +d_{1}=\left\langle q_{2},y\right\rangle +d_{2}=\left\langle r_{1},y\right\rangle +e_{1}=\left\langle r_{2},y\right\rangle +e_{2}
\]
for some $y\in\mathbb{R}^{3}$.\\
 We shall now prove that the following equation 
\[
(10)\:\left\langle p,z\right\rangle +c_{1}=\left\langle p+\bar{p},z\right\rangle +c_{2}=\left\langle p+\bar{\bar{p}},z\right\rangle +c_{3}=\left\langle r_{1},z\right\rangle +e_{1}
\]
for some $z\in\mathbb{R}^{3}$, can be expanded to the following equation:
\[
(11)\:\left\langle p,z\right\rangle +c_{1}=\left\langle p+\bar{p},z\right\rangle +c_{2}=\left\langle p+\bar{\bar{p}},z\right\rangle +c_{3}=\left\langle r_{1},z\right\rangle +e_{1}=\left\langle r_{2},z\right\rangle +e_{2}
\]
for the same $z$.\\
Let us define $\bar{r}$ to be a vector parallel to $r_{2}-r_{1}$,
$\bar{q}$ to be a vector parallel to $q_{2}-q_{1}$, and $P$ the
plane spanned by the vectors $\bar{p}$ and $\bar{\bar{p}}$. There
are several cases possible: the two edges are not in the same plane,
and neither is parallel to $P$ (cf. Figure 11), the edges lie in
the same plane but are not parallel, and neither is parallel to $P$
(cf. Figure 12), the edges are parallel to each other and to $P$
(cf. Figure 13), the edges are parallel but not to $P$ (cf. Figure
14).

\includegraphics[scale=0.4]{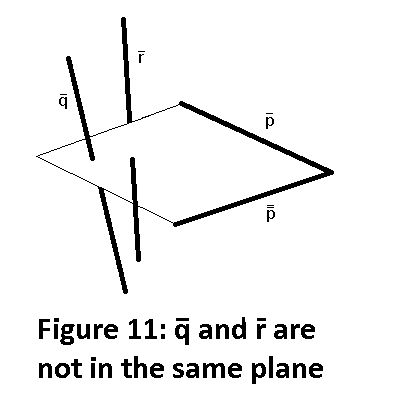}\includegraphics[scale=0.4]{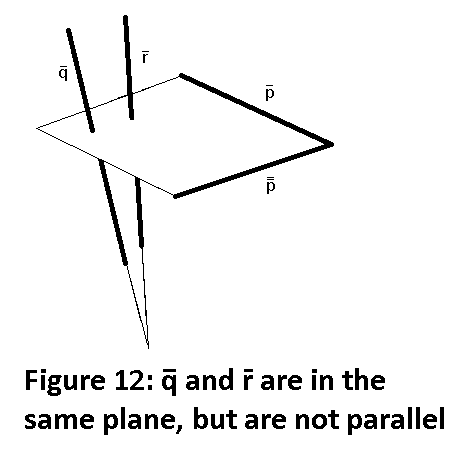}

\includegraphics[scale=0.4]{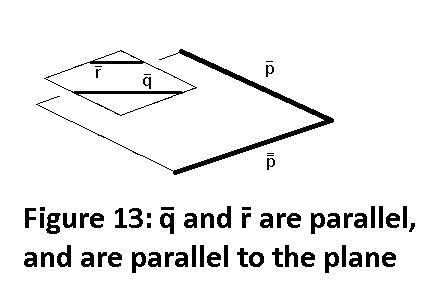}\includegraphics[scale=0.4]{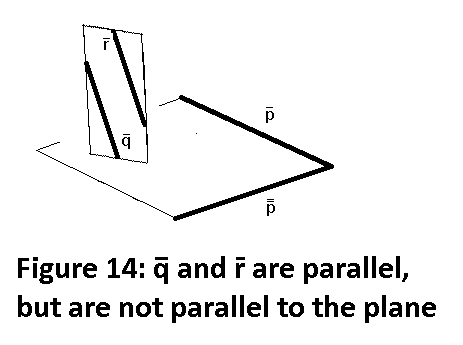}

For all cases we should set $x=\bar{0},\:c_{i}=d_{i}=0$, which implies
that $\left\langle \bar{q},y\right\rangle =$ $\left\langle \bar{p},z\right\rangle =\left\langle \bar{\bar{p}},z\right\rangle =0$.

For the first case we rearrange the equations above, assume equation
(11) holds, and try to extend 
\[
\left\langle q_{1},y\right\rangle +d_{1}=\left\langle q_{2},y\right\rangle +d_{2}=\left\langle r_{1},y\right\rangle +e_{1}
\]

to 
\[
\left\langle q_{1},y\right\rangle +d_{1}=\left\langle q_{2},y\right\rangle +d_{2}=\left\langle r_{1},y\right\rangle +e_{1}=\left\langle r_{2},y\right\rangle +e_{2}
\]

The first case is then obvious, because none of the four vertices
of the polytope dual to $y$ represent a condition to $y$, and thus
no double conditions can occur.\\
From now on, we return to the original arrangement of the equations
(8)-(10) and try to prove (11) follows.

For the second case, where $\bar{r}$ and $\bar{q}$ lie in the same
plane but are not parallel - let us assume $\bar{r}$'s extrapolation
intersects the extrapolation of $P$. Then:\\
 $r_{1}=p+a\bar{p}+b\bar{\bar{p}}+c\bar{r},r_{2}=p+a\bar{p}+b\bar{\bar{p}}+d\bar{r}$\\
 $r_{1}=q_{1}+e\bar{q}+c'\bar{r},r_{2}=q_{1}+d\bar{q}+(c'+d-c)\bar{r}$.\\
 $\left\langle q_{1},y\right\rangle =\left\langle r_{1},y\right\rangle +e_{1}=\left\langle q_{1},y\right\rangle +c'\left\langle \bar{r},y\right\rangle +e_{1}$,
which implies that $e_{1}=-c'\left\langle \bar{r},y\right\rangle $.
In the same manner $e_{2}=-(c'+d-c)\left\langle \bar{r},y\right\rangle $\\
 $\left\langle p,z\right\rangle =\left\langle r_{1},z\right\rangle +e_{1}=\left\langle p,z\right\rangle +c\left\langle \bar{r},z\right\rangle -c'\left\langle \bar{r},y\right\rangle $
which implies that $c\left\langle \bar{r},z\right\rangle =c'\left\langle \bar{r},y\right\rangle $.
Again, as we know there are some $y_{1}$ and $z_{1}$ which fulfil
$\left\langle \bar{r},z_{1}-y_{1}\right\rangle =0$ we get $c=c'$
and so, for all $y$ and $z$ we have $\left\langle \bar{r},z-y\right\rangle =0$.
Therefore $\left\langle r_{2},z\right\rangle +e_{2}=\left\langle p,z\right\rangle +d\left\langle \bar{r},y\right\rangle -(c'+d-c)\left\langle \bar{r},y\right\rangle $
$=$ $\left\langle p,z\right\rangle =\left\langle r_{1},z\right\rangle +e_{1}$.
The meaning of $c=c'$ is that this case is possible only if $\bar{r}$'s
extrapolation meets $\bar{q}$'s extrapolation on $P$'s extrapolation.
Otherwise, this dual division does not represent a hypersurface with
three vertices with valencies greater than four.\\

The third case is that of $\bar{r}\parallel\bar{q}\parallel P$. In
that case $\bar{r}=a\bar{p}+b\bar{\bar{p}}$ and $\bar{q}=a'\bar{p}+b'\bar{\bar{p}}\Leftrightarrow$
$\left\langle \bar{r},z\right\rangle =\left\langle \bar{q},z\right\rangle =0$.
We should also recall that $\left\langle \bar{q},y\right\rangle =0$
and as $\bar{r}\parallel\bar{q}$ we also get $\left\langle \bar{r},y\right\rangle =0$.
Therefore, the question if $\left\langle r_{1},z\right\rangle +e_{1}=\left\langle r_{2},z\right\rangle +e_{2}$
is equivalent to the question whether $e_{1}=e_{2}$, but we know
$\left\langle r_{1},y\right\rangle +e_{1}=\left\langle r_{2},y\right\rangle +e_{2}$
and $\left\langle \bar{r},y\right\rangle =0$, and so we get $e_{1}=e_{2}$.\\
 The last case is where $\bar{r}\parallel\bar{q}$, but $R$'s extrapolation
intersects $P$'s extrapolation. $\left\langle \bar{q},y\right\rangle =0$
and $R\parallel Q$, which implies that $\left\langle \bar{r},y\right\rangle =0$.
$r_{1}=p+a\bar{p}+b\bar{\bar{p}}+c\bar{r}=q_{1}+a'\bar{p}+b'\bar{\bar{p}}+c'\bar{r},r_{2}=p+a\bar{p}+b\bar{\bar{p}}+d\bar{r}=q_{1}+a'\bar{p}+b'\bar{\bar{p}}+(c'+d-c)\bar{r}$.
Let us mark $\widetilde{p}\equiv a'\bar{p}+b'\bar{\bar{p}}$. Now,
$\left\langle q_{1},y\right\rangle =\left\langle r_{1},y\right\rangle +e_{1}=\left\langle q_{1},y\right\rangle +\left\langle \widetilde{p},y\right\rangle +e_{1}$,
which implies that $e_{1}=-\left\langle \widetilde{p},y\right\rangle $,
and in the same manner $e_{2}=-\left\langle \widetilde{p},y\right\rangle $
(as $\left\langle \bar{r},y\right\rangle =0$).\\
 $\left\langle p,z\right\rangle =\left\langle r_{1},z\right\rangle +e_{1}=\left\langle p,z\right\rangle +c\left\langle \bar{r},z\right\rangle -\left\langle \widetilde{p},y\right\rangle $\\
 $\left\langle r_{2},z\right\rangle +e_{2}=\left\langle p,z\right\rangle +d\left\langle \bar{r},z\right\rangle -\left\langle \widetilde{p},y\right\rangle $\\
We know $c\neq d$ and therefore an equality between $\left\langle r_{1},z\right\rangle +e_{1}$
and $\left\langle r_{2},z\right\rangle +e_{2}$ can only be achieved
if $\left\langle \bar{r},z\right\rangle =0$. However, we know this
cannot be since $R$'s and $P$'s extrapolations intersect, and $P$'s
extrapolation is perpendicular to $z$. This means there cannot be
a hypersurface with three vertices with OV which is dual to such dual
subdivision. This close the section of a face and two edges.\\
 $\:$

The next case is of three OV polytopes where two of them share an
edge $R=\{r_{1},r_{2}\}$ and each of the other pairs shares a face
spanned by $P=\{p,p+\bar{p},p+\bar{\bar{p}}\}$ and $Q=\{q,q+\bar{q},q+\bar{\bar{q}}\}$.
We now have a new set of equations: 

\[
(12)\:\left\langle p,x\right\rangle +c_{1}=\left\langle p+\bar{p},x\right\rangle +c_{2}=\left\langle p+\bar{\bar{p}},x\right\rangle +c_{3}=
\]
\[
\left\langle q,x\right\rangle +d_{1}=\left\langle q+\bar{q},x\right\rangle +d_{2}=\left\langle q+\bar{\bar{q}},x\right\rangle +d_{3}
\]
for some $x\in\mathbb{R}^{3}$.\textbf{
\[
(13)\:\left\langle q,y\right\rangle +d_{1}=\left\langle q+\bar{q},y\right\rangle +d_{2}=\left\langle q+\bar{\bar{q}},y\right\rangle +d_{3}=\left\langle r_{1},y\right\rangle +e_{1}=\left\langle r_{2},y\right\rangle +e_{2}
\]
}for some $y\in\mathbb{R}^{3}$.\\
 We shall now prove that the following equation

\[
(14)\:\left\langle p,z\right\rangle +c_{1}=\left\langle p+\bar{p},z\right\rangle +c_{2}=\left\langle p+\bar{\bar{p}},z\right\rangle +c_{3}=\left\langle r_{1},z\right\rangle +e_{1}
\]
for some $z\in\mathbb{R}^{3}$, can be expanded to the following equation:
\[
(15)\:\left\langle p,z\right\rangle +c_{1}=\left\langle p+\bar{p},z\right\rangle +c_{2}=\left\langle p+\bar{\bar{p}},z\right\rangle +c_{3}=\left\langle r_{1},z\right\rangle +e_{1}=\left\langle r_{2},z\right\rangle +e_{2}
\]
for the same $z$.\\
 Let us define $\bar{r}$ to be a vector which is parallel to $r_{2}-r_{1}$.
The possibilities are: The edge and the two faces are all parallel
(cf. Figure 15), the extrapolation of the edge intersects the extrapolation
of the two parallel faces (cf. Figure 16), the extrapolation of the
edge is parallel to the intersection of the extrapolations of the
two faces (which are not parallel) (cf. Figure 17), the edge is parallel
to one face but not to the other (cf. Figure 18). 

\includegraphics[scale=0.4]{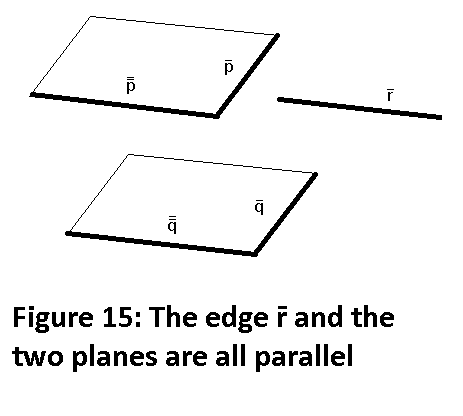}\includegraphics[scale=0.4]{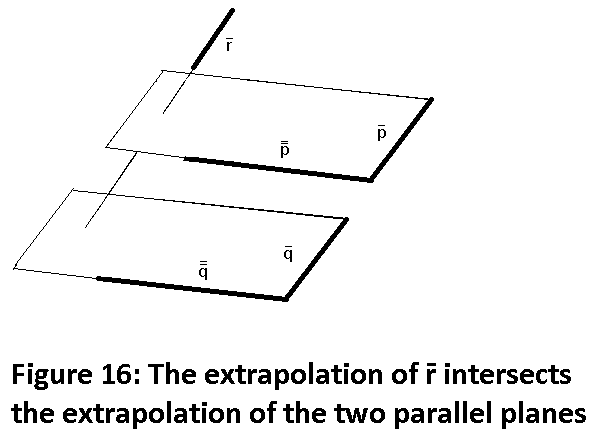}

\includegraphics[scale=0.4]{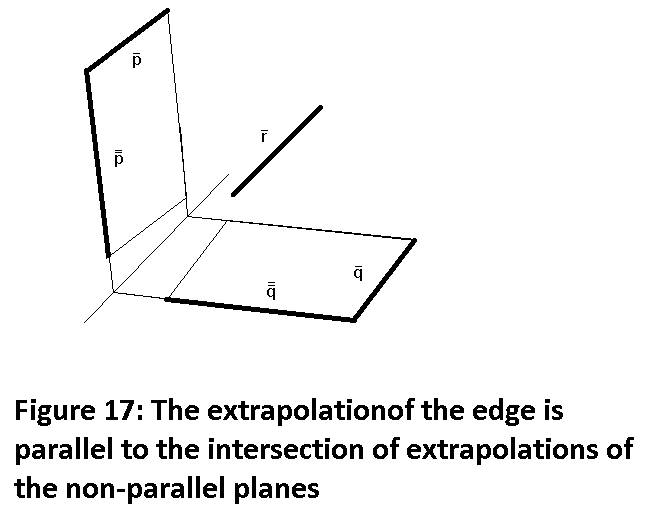}\includegraphics[scale=0.4]{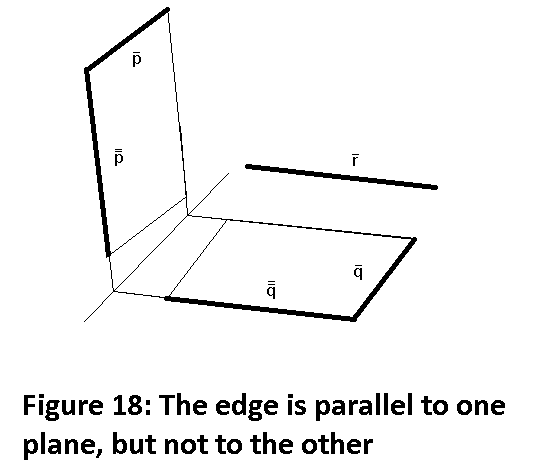}

$\:$

For all the cases we shall take $x=\bar{0},c_{i}=d_{i}=0$, which
implies that $\left\langle \bar{q},y\right\rangle =$ $\left\langle \bar{\bar{q}},y\right\rangle =\left\langle \bar{p},z\right\rangle =\left\langle \bar{\bar{p}},z\right\rangle =0$.
The first case cannot be, because one cannot create non-intersecting
convex polytopes from this constellation. In the second case we have
$r_{1}=q+a\bar{q}+b\bar{\bar{q}}+c\bar{r},r_{2}=q+a\bar{q}+b\bar{\bar{q}}+d\bar{r}$
and so $\left\langle q,y\right\rangle =\left\langle r_{1},y\right\rangle +e_{1}=$
$\left\langle q,y\right\rangle +a\left\langle \bar{q},y\right\rangle +b\left\langle \bar{\bar{q}},y\right\rangle +c\left\langle \bar{r},y\right\rangle +e_{1}=\left\langle q,y\right\rangle +c\left\langle \bar{r},y\right\rangle +e_{1}$,
which implies that $e_{1}=-c\left\langle \bar{r},y\right\rangle $
and in the same manner $e_{2}=-d\left\langle \bar{r},y\right\rangle $.\\
 We can also represent $r_{1}=p+a'\bar{p}+b'\bar{\bar{p}}+c'\bar{r},r_{2}=p+a'\bar{p}+b'\bar{\bar{p}}+(c'+d-c)\bar{r}$
and so $\left\langle p,z\right\rangle =\left\langle r_{1},z\right\rangle +e_{1}=\left\langle p,z\right\rangle +a'\left\langle \bar{p},z\right\rangle +$
$b'\left\langle \bar{\bar{p}},z\right\rangle $ $+c'\left\langle \bar{r},z\right\rangle -$
$c\left\langle \bar{r},y\right\rangle =\left\langle p,z\right\rangle +c'\left\langle \bar{r},z\right\rangle -c\left\langle \bar{r},y\right\rangle $
and as we know, there are $y_{1}$ and $z_{1}$ for which $\left\langle \bar{r},z_{1}-y_{1}\right\rangle =0$
due to the duality, and therefore $c=c'$. That leads to $\left\langle \bar{r},z-y\right\rangle =0$
for all relevant $y$ and $z$. That, in turn, leads to $\left\langle r_{2},z\right\rangle +e_{2}=\left\langle p,z\right\rangle +d\left\langle \bar{r},z\right\rangle -d\left\langle \bar{r},y\right\rangle $
and since $\left\langle \bar{r},z-y\right\rangle =0$ we get $\left\langle r_{2},z\right\rangle +e_{2}=\left\langle p,z\right\rangle =\left\langle r_{1},z\right\rangle +e_{1}$.
The fact that $c=c'$ means that the only option for that case to
describe the wanted hypersurface is if the extrapolation of $R$ intersects
the intersection of the two faces. \\
In the third case we have $\bar{r}=a\bar{p}+b\bar{\bar{p}}=a'\bar{q}+b'\bar{\bar{q}}\Rightarrow\left\langle \bar{r},y\right\rangle =\left\langle \bar{r},z\right\rangle =0$.
Since we know $\left\langle r_{1},y\right\rangle +e_{1}=\left\langle r_{2},y\right\rangle +e_{2}$
we get $e_{1}=e_{2}$. And that is why $\left\langle r_{1},z\right\rangle +e_{1}=\left\langle r_{2},z\right\rangle +e_{2}$.\\
 The last case is the one where $R$ is parallel to $P$, without
loss of generality, and $R$'s extrapolation intersects $Q$'s. Then
we have $\bar{r}=a\bar{p}+b\bar{\bar{p}}$, which implies $\left\langle \bar{r},z\right\rangle =0$.
Since $r_{1}=q+a\bar{q}+b\bar{\bar{q}}+c\bar{r},r_{2}=q+a\bar{q}+b\bar{\bar{q}}+d\bar{r}$
we get $\left\langle q,y\right\rangle =\left\langle r_{1},y\right\rangle +e_{1}=$
$\left\langle q,y\right\rangle +a\left\langle \bar{q},y\right\rangle +b\left\langle \bar{\bar{q}},y\right\rangle +c\left\langle \bar{r},y\right\rangle +e_{1}=\left\langle q,y\right\rangle +e_{1}$,
which implies that $e_{1}=-c\left\langle \bar{r},y\right\rangle $
and the same way leads to $e_{2}=-d\left\langle \bar{r},y\right\rangle $.
Since we know $\left\langle \bar{r},z\right\rangle =0$ the question
whether $\left\langle r_{1},z\right\rangle +e_{1}=\left\langle r_{2},z\right\rangle +e_{2}$
is equivalent to the question whether $e_{1}=e_{2}$ for which we
know the answer is \textquotedbl{}no\textquotedbl{} ,since $c\neq d$
and since $\left\langle \bar{r},y\right\rangle \neq0$. The last inequality
arises from the given fact that $R$ is not parallel to $Q$, and
from the fact that $y$ is perpendicular to $Q$.\\
$\:$

The last option, where each two polytopes intersect in a face, can
be understood from the previous option - a hypersurface as needed
can be created only when all the extrapolations to the faces intersect
in the same line. In this case, there are no double conditions, and
the counting in the order suggested is accurate.
\end{proof}
\begin{rem}
\label{cor-n=00003D3=00003D=00003D>n=00003D2}The proof of the case
where each two polytopes share an edge, and all these edges lie in
the same plane but are not parallel, can actually prove \textcolor{black}{Theorem
\ref{thm:3 non triang}.} Recall that \textcolor{black}{Theorem \ref{thm:3 non triang}}
deals with a planar curve with only three vertices of valency higher
than three in $\mathbb{R}^{2}$, and states that it gets an exact
rank.\textcolor{black}{{} To see that, take the planar graph of Theorem
\ref{thm:3 non triang}, add a vertex not in the plane of the graph,
attach all the other vertices to it and complete the faces as needed.
Now we can apply Theorem \ref{thm:n>3 second} to the result graph. }
\end{rem}

\thanks{\textbf{Acknowledgements:} This work was carried out in Tel-Aviv
University, under the supervision of the supportive and helpful Prof.
Eugenii Shustin, to whom I owe the opportunity to experience this
field of mathematics.\\
I would also like to thank Daniel Benarroch for reading the manuscript
and for his useful remarks. \\
}

\end{document}